\newcommand{\mE}{\mathbb{E}}
\newcommand{\mP}{\mathbb{P}}
\newcommand{\mR}{\mathbb{R}}
\newcommand{\cF}{\mathcal{F}}
\newcommand{\cG}{\mathcal{G}}
\newcommand{\lc}{\left\lbrace}
\newcommand{\rc}{\right\rbrace}
\newcommand{\la}{\left\lbrack}
\newcommand{\ra}{\right\rbrack}
\newcommand{\1}{\mathds{1}}
\newcommand\dd{\mathrm{d}}
\newtheorem {definition}{Definition}[section]
\newtheorem {theorem}[definition]{Theorem}
\newtheorem {lemma}[definition]{Lemma}
\newtheorem {remark}[definition]{Remark}
\DeclareMathOperator*{\sgn}{sgn}
\title[S\MakeLowercase{table processes conditioned to avoid an interval}]{Stable processes conditioned to avoid an interval}
\author{Leif D\"oring}
\address{Leif D\"oring: University of Mannheim, Institute of Mathematics, 68161 Mannheim, Germany.}
\email{doering@uni-mannheim.de}
\author{Andreas E. Kyprianou$^*$
}\thanks{$^*$Supported by EPSRC grants EP/L002442/1 and EP/M001784/1}
\address{Andreas E. Kyprianou: University of Bath, Department of Mathematical Sciences, Bath, BA2 7AY, UK.}
\email{a.kyprianou@bath.ac.uk}
\author{Philip Weißmann$^{**}$}\thanks{$^{**}$Supported by the Research Training Group "Statistical Modeling of Complex Systems"}
\address{Philip Weißmann: University of Mannheim, Institute of Mathematics, 68161 Mannheim, Germany.}
\email{hweissma@mail.uni-mannheim.de}
\begin{document}

\begin{abstract}
Conditioning Markov processes to avoid a domain is a classical problem that has been studied in many settings. Ingredients for standard arguments involve the leading order tail asymptotics of the distribution of the first hitting time of the domain of interest and its relation to  an underlying harmonic function. 

In the present article we condition stable processes to avoid intervals.  The required tail asymptotics in the stable setting for $\alpha\geq 1$ go back to classical work of Blumenthal et al. and  Port from the 1960s. For $\alpha<1$, we appeal to  recent results centred around the so-called deep  factorisation of the stable process to compute hitting probabilities and, moreover,  to identify the associated harmonic functions for all $\alpha\in (0,2)$. With these in hand, we thus prove that  conditioning to avoid an interval  is possible in the classical sense and that the resulting process is a Doob  $h$-transform of the stable process killed on entering the aforesaid interval. Appealing to the representation of the conditioned process as a Doob $h$-transform, we   verify that the conditioned process is transient.
\end{abstract}
\maketitle

\section{Introduction and main results} \label{intro}
Conditioning Markov processes to avoid sets is a classical problem. Indeed, suppose $P^x$, $x\in E$, represent a family of Markov probabilities on the state space $E$. Moreover, suppose that  $T$ is the first hitting time of a given domain. In the case that $T$ is $P^x$-almost surely finite, for each $x\in E$, it is non-trivial to construct and characterise the conditioned process through the natural limiting procedure
\begin{align}\label{cond}
	\lim_{s\to \infty}P^x(\Lambda\,|\, s+t<T),\quad \Lambda\in \mathcal F_t, x\in E,
\end{align}
where $\mathcal{F}_t$, $t\geq 0$ is the natural filtration of the underlying Markov process.

\smallskip

The most classical example in this respect is the Brownian motion conditioned to avoid the negative half-line; that is to say $P^x$, $x\in\mathbb{R}$, corresponds to the probabilities of Brownian motion and $T$ is the first hitting time of $(-\infty,0]$. See for instance Chapter VI.3 of Revuz and Yor \cite{Rev_Yor_01}. The conditioning \eqref{cond} can be made rigorous and is identified as the   the so-called Doob $h$-transform of the Brownian motion killed on exiting the upper half line. As such conditioned Brownian motion is uniquely characterised by the harmonic function $h(x)=x$ on $(0,\infty)$, from which it is an easy task to identify the conditioned process via its infinitesimal generator. Indeed,  the latter is equal to $\mathcal A^h f= \mathcal A (hf)/h$, where $f$ belongs to an appropriately rich set of test functions on $(0,\infty)$ and $\mathcal A$ is the infinitesimal  generator of Brownian motion killed on exiting $(0,\infty)$. It turns out that the resulting conditioned process is a  three-dimensional Bessel process. 

\smallskip

Extensions of this result have been obtained to condition L\'evy processes to stay positive, see Chaumont and Doney \cite{Chau_Don_01}. In that case, the associated  harmonic function that plays the role of  $h$ in the Brownian example above is given by the  the potential of the descending ladder height process. In a similar spirit Bertoin and Doney \cite{Bert_Don_01} have shown how to condition random walks conditioned to stay non-negative. Other examples of Markov processes conditioned to avoid domains via a limiting procedure, and thus characterised as a Doob-$h$ transform of the original process killed on exiting the specified domain, include  random walks conditioned to stay in cones (Denisov and Wachtel \cite{Den_Wac_01}),  random walks with finite second moments conditioned to avoid an interval (Vysotsky \cite{Vys_01}), spectrally negative L\'evy processes conditioned to stay in an interval (Lambert \cite{Lam_01}), subordinators conditioned to stay in an interval (Kyprianou et al. \cite{Kyp_Riv_Sen_02}), L\'evy processes conditioned to avoid the origin 
(Pant\'i  \cite{Pan_01} and Yano \cite{Yan_01}) or self-similar Markov processes conditioned to avoid the origin (Kyprianou et al. \cite{Kyp_Riv_Sat_01}).

\smallskip

 The purpose of this article is to take advantage of the path discontinuities of strictly stable processes ({\it henceforth referred to as  stable processes}) and to condition them to avoid an interval. Such a conditioning is not immediately obvious as, in contrast to L\'evy processes killed on the negative half-line, we cannot rely on the existence of a generic   positive harmonic function of a L\'evy processes killed in an interval.\smallskip
 





Let ${X} = ({X}_t)_{t \geq 0}$ be a stable  process and denote by $\mP^x$ the law of ${X}$ started from $x \in \mR$. More precisely, this means that ${X}$ is the canonical process on the space of c\`adl\`ag paths equipped with the $\sigma$-algebra $\cF$ induced by the Skorohod topology and $\mP^x$ is the probability measure that makes ${X}$ a stable process started from $x$. Stable processes observe the scaling property
\begin{align}\label{scaling}
\left((c {X}_{c^{-\alpha}t})_{t \geq 0},\mP^x \right) \overset{(d)}{=} \left(({X}_{t})_{t \geq 0},\mP^{cx}\right)
\end{align}
for all $x \in \mR$ and $c>0$, where $\alpha$ is the index of self-similarity. It turns out to be necessarily that case that $\alpha\in (0,2]$ with $\alpha=2$ corresponding to the Brownian motion. The continuity of sample paths excludes the Brownian motion from our study so we restrict to $\alpha\in (0,2)$.

\smallskip

As a L\'evy process, stable processes are characterised entirely by their jump measure  which is known to be
\begin{align*}
	\Pi(\dd x) = \frac{\Gamma(\alpha+1)}{\pi} \left\{
\frac{ \sin(\pi \alpha \rho) }{ x^{\alpha+1}} \1_{\{x > 0\}} + \frac{\sin(\pi \alpha \hat\rho)}{ {|x|}^{\alpha+1} }\1_{\{x < 0\}}
 \right\}\dd x,\qquad  x \in \mathbb{R},\end{align*}
where 
$\rho:=\mP^0({X}_1 \geq 0)$. For $\alpha \in (0,1)$ we exclude the case $\rho \in \lc 0,1 \rc$, in which case ${X}$ is (the negative of) a subordinator and for $\alpha \in (1,2)$, which forces $\rho\in[{1}/{\alpha}, 1- {1}/{\alpha} ]$, we exclude $\rho \in \left\lbrace {1}/{\alpha}, 1- {1}/{\alpha} \right\rbrace$, in which case ${X}$ has one-sided jumps.  For $\alpha=1$ the stable process reduces to the symmetric Cauchy process with $\rho=\frac 1 2$. We have taken a normalisation for which its characteristic exponent satisfies
\[
\mathbb{E}^x[{\rm e}^{{\rm i}\theta( {X}_1-x)}] = {\rm e}^{-|\theta|^\alpha}, \qquad \theta\in\mathbb{R}.
\]

An important fact we will use for the parameter regimes is that the stable process exhibits (set) transience and (set) recurrence according to whether $\alpha\in (0,1)$ or $\alpha\in[1,2)$. When $\alpha\in(1,2)$ the notion of recurrence is even stronger in the sense that individual points are hit with probability one. 
 \smallskip
 
  For an open or closed set $B \subseteq \mR$ let
$$T_{B} := \inf\left\lbrace t \geq 0: {X}_t \in  B \right\rbrace$$
be the first hitting time of $B$. It is known that this is a stopping time with respect to the natural enlargement of the filtration induced by ${X}$, which we denote by $(\mathcal{F}_t)_{t \geq 0}$ (i.e. the right-continuous extension, enlarged by null sets). For further details about stable processes we refer the reader to Chapter 7 of Bertoin \cite{Bert_01}, Chapter 3 of Sato \cite{Sat_01} or the review article Kyprianou \cite{ALEA_kyp}.

\smallskip

Stable processes with $\alpha\in [1,2)$ are recurrent, hence, the event $\left\lbrace T_{[a,b]}=+\infty \right\rbrace$ has zero probability. It is therefore non-trivial to establish that the limit
\begin{align}\label{cond_process}
	\lim\limits_{s \rightarrow \infty} \mP^x(\Lambda \,|\, t+ s <  T_{[a,b]})
\end{align}
exists for $\Lambda \in \mathcal{F}_t$, $x \in \mR \backslash [a,b]$, and is well defined in terms of characterising a conditioned process. We show that \eqref{cond_process} defines a probability measure on path space under which the canonical process is a Markov process on $\mR \backslash [a,b]$ with dynamics that are identified as a Doob $h$-transform of the original process killed in $[a,b]$. Let us quickly recall the concept of $h$-transforms. Denote by $p_t^{[a,b]}(x,\dd y) = \mP^x({X}_t \in \dd y, t < T_{[a,b]}), t\geq 0,$ the sub-Markov transition kernel of the process killed on first entry into $[a,b]$. Write  $({\texttt P}_t^{[a,b]})_{t\geq 0}$ for the  corresponding transition semigroup. A harmonic function for the killed process is a strictly positive, measurable function $h : \mR \backslash [a,b] \rightarrow (0,\infty)$ such that ${\texttt P}_t^{[a,b]}h(x) = h(x)$ or, equivalently,
\begin{align*}
	\mE^x \la \1_{\lc t < T_{[a,b]} \rc} h({X}_t) \ra = h(x),
\end{align*}
for all $x \in \mR \backslash [a,b]$ and all $t \geq 0$. Note that thanks to the Markov property this is equivalent to $(\1_{\lc t < T_{[a,b]} \rc} h({X}_t))_{t \geq 0}$ being a $\mP^x$-martingale with respect to $(\cF_t)_{t \geq 0}$. 
When $h$ is a positive harmonic function,  the associated Doob $h$-transform is defined via the change of measure
\begin{align}\label{def_htrafo}
	\mP^{\updownarrow,x}(\Lambda) \coloneqq \mE^x \la \1_\Lambda \1_{\lc t < T_{[a,b]} \rc} \frac{h({X}_t)}{h(x)}  \ra,\quad x\in\mR \backslash [a,b],
\end{align}
for $\Lambda \in \cF_t$. From Chapter 11 of Chung and Walsh \cite{Chu_Wal_01} we know that under $\mP^{\updownarrow,x}$ the canonical process is strong Markov and the transition semigroup satisfies 
\[
{\texttt P}_t^\updownarrow f(x)=\frac{1}{h(x)}{\texttt P}_t^{[a,b]} (fh)(x), \qquad x\in\mathbb{R}
\backslash[a,b],
\]
for bounded measurable $f$ on $\mathbb{R}
\backslash[a,b]$.

\smallskip

Before stating our main  results, note that we can easily reduce our analysis for the interval $[a,b]$ to the interval $[-1,1]$. Indeed, suppose that $h$ is a harmonic function for $({\texttt P}_t^{[-1,1]})_{t \geq 0}$. Then the function
$$h_{[a,b]}(x) \coloneqq h\left(\frac{2}{b-a}x - \frac{b+a}{b-a}\right), \quad x\in\mR \backslash [a,b],$$
is harmonic for $({\texttt P}_t^{[a,b]})_{t \geq 0}$. With the help of stationary independent increments and the scaling property for stable processes we have
\begin{align*}
\mE^{x} \la \1_{\lc t < T_{[a,b]} \rc} h\left(\frac{2}{b-a}{X}_t - \frac{b+a}{b-a}\right) \ra &= \mE^{x} \la \1_{\lc t < T_{[a,b]} \rc} h\left(\frac{2}{b-a}\left({X}_t - \frac{b+a}{2}\right)\right) \ra \\
&= \mE^{x - \frac{b+a}{2}} \la \1_{\lc t < T_{[-\frac{b-a}{2},\frac{b-a}{2}]} \rc} h\left(\frac{2}{b-a}{X}_t\right) \ra \\
&= \mE^{\frac{2}{b-a}(x - \frac{b+a}{2})} \la \1_{\lc \left(\frac{b-a}{2}\right)^{-\alpha} t <  T_{[-1,1]} \rc} h\left({X}_{\left(\frac{b-a}{2}\right)^{-\alpha}t}\right) \ra \\
&= h\left(\frac{2}{b-a}x - \frac{b+a}{b-a}\right).
\end{align*}
As a consequence we will focus for the rest of the article on the case $[a,b] = [-1,1]$.

\smallskip

As a prelude to our first theorem, let us introduce the function
\begin{align*}
	\psi_{\alpha\rho}(z) = (z-1)^{\alpha\hat{\rho}-1}(z+1)^{\alpha\rho-1},\quad z\geq 1,
\end{align*}
and the analogous expression $\psi_{\alpha\hat{\rho}}$, for which $\rho$ is replaced with with $\hat{\rho} \coloneqq 1-\rho$. The first result identifies positive harmonic functions for the stable processes killed on entering $[-1,1]$.
\begin{theorem} \label{thm_harmonic}
Let ${X}$ be a two-side stable process with $\alpha \in (0,2)$, then
$$h(x) \coloneqq
\begin{cases}
 \int_1^x \psi_{\alpha\rho}(z) \, \dd z&\text{if }  x >1 \\
 \int_1^{|x|} \psi_{\alpha\hat{\rho}}(z) \, \dd z&\text{if }  x <-1
\end{cases}, $$
is a harmonic function for $({\emph{\texttt P}}_t^{[-1,1]})_{t \geq 0}$.
\end{theorem}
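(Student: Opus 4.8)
The plan is to prove harmonicity in its martingale form: extending $h$ by $0$ on $[-1,1]$ to a function $\tilde h$ on all of $\mR$, the claim is that $(\1_{\lc t<T\rc}\tilde h({X}_t))_{t\geq 0}$ is a $\mP^x$-martingale, where $T:=T_{[-1,1]}$. The natural route is to show that $\tilde h$ is annihilated by the infinitesimal generator $\mathcal L$ of ${X}$ on the open complement $\mR\setminus[-1,1]$, to deduce from Dynkin's formula that $\tilde h({X}_{t\wedge T})$ is a local martingale, and finally to upgrade this to a true martingale. Because the jump measure $\Pi$ is invariant under the simultaneous reflection $x\mapsto-x$ and swap $\rho\leftrightarrow\hat\rho$, it suffices throughout to treat $x>1$; the case $x<-1$ then follows by symmetry, $\psi_{\alpha\hat\rho}$ playing the role of $\psi_{\alpha\rho}$.

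\emph{Step 1: the pointwise identity $\mathcal L\tilde h=0$.} Writing $\mathcal L$ in jump form (with a compensating term $y\tilde h'(x)$ when $\alpha\in[1,2)$, interpreted as a principal value when $\alpha=1$), the identity $\mathcal L\tilde h(x)=0$ for $x>1$ becomes an explicit integral of $\tilde h(x+y)-\tilde h(x)$ against $\Pi(\dd y)$. Splitting the range of $y$ according to whether $x+y$ lies in $(1,\infty)$, in $[-1,1]$ (where $\tilde h$ vanishes, which is precisely where the killing is encoded), or in $(-\infty,-1)$, and inserting the power-law density of $\Pi$ together with $\psi_{\alpha\rho}$, reduces the statement to a beta-function / Gauss hypergeometric identity in the exponents $\alpha\rho-1$ and $\alpha\hat\rho-1$. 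Identities of exactly this shape underlie the classical closed-form harmonic measures and Green operators of stable processes due to Blumenthal--Getoor--Ray and Port. I expect this to be \emph{the main obstacle}: the integrand is singular both at the endpoints $\pm1$, where $\psi_{\alpha\rho}$ blows up (albeit integrably, so that $\tilde h$ is continuous with $\tilde h(\pm1)=0$), and along the diagonal of $\Pi$, so the principal value must be tracked with care and the computation organised to remain valid across the three regimes $\alpha\in(0,1)$, $\alpha=1$ and $\alpha\in(1,2)$, in which the form of $\mathcal L$ differs.

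\emph{Step 2: from local to genuine martingale.} Granting $\mathcal L\tilde h=0$, Dynkin's formula applied to $\tilde h$---continuous on $\mR$, vanishing on $[-1,1]$ and smooth on the open complement away from $\pm1$---yields that $\tilde h({X}_{t\wedge T_n})$ is a martingale along a localising sequence $T_n$ confining ${X}$ to compact subsets of $\mR\setminus\lc-1,1\rc$. To remove the localisation I would dominate $\tilde h({X}_{t\wedge T_n})$ using the integrability of the endpoint singularity of $\psi_{\alpha\rho}$ together with moment control on $\sup_{s\leq t}|{X}_s|$, and pass to the limit by dominated convergence; the only growth to control is $h(x)\sim c\,x^{\alpha-1}$ as $x\to\infty$ when $\alpha\in[1,2)$, whereas for $\alpha\in(0,1)$ the function $h$ is bounded and the upgrade is immediate. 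Indeed, for $\alpha\in(0,1)$ there is a shortcut bypassing Step 1 altogether: by transience $\mP^\cdot(T=\infty)$ is strictly positive and manifestly harmonic by the Markov property, so it suffices to identify it, up to a multiplicative constant, with the bounded function $h$; recurrence for $\alpha\in[1,2)$ removes this option and makes the martingale verification unavoidable.

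\emph{An alternative, more structural route.} In parallel I would push the problem onto a half-line. The M\"obius map $\phi(x)=(x-1)/(x+1)$ sends $\mR\setminus[-1,1]$ onto $(0,\infty)$ and $[-1,1]$ onto $(-\infty,0]$, and by the Riesz--Bogdan--\.{Z}ak transformation the stable process killed in $[-1,1]$ is carried---up to a time change and an explicit Doob $h$-transform---to the stable process killed on entering $(-\infty,0]$, whose harmonic function for staying positive is a known power function (the renewal function of the relevant ladder height subordinator). Transporting this function back through $\phi$ and folding in the Jacobian of $\phi$ and the $h$-transform density should reproduce $h$, equivalently its derivative $\psi_{\alpha\rho}$, the integral representation emerging from the change of variables. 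The obstacle here is purely bookkeeping---tracking the time-change and $h$-transform factors correctly---but if carried through it simultaneously verifies harmonicity and explains the origin of the exponents $\alpha\rho-1$ and $\alpha\hat\rho-1$.
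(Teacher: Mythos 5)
Your main route has a genuine gap exactly where the content of the theorem lies. Step 1 --- the identity $\mathcal{L}\tilde h=0$ on $\mR\setminus[-1,1]$ --- is never carried out: it is asserted to reduce to ``a beta-function / Gauss hypergeometric identity'' and you yourself flag it as the main obstacle. That identity \emph{is} the theorem; the surrounding scaffolding (Dynkin's formula, localisation, domination via $h(x)\sim c\,x^{\alpha-1}$ and moment bounds on $\sup_{s\leq t}|{X}_s|$) is routine once it is known, but nothing in your write-up indicates how to perform the singular integral computation against $\Pi$, uniformly over the three regimes of $\alpha$ and near the endpoint blow-up of $\psi_{\alpha\rho}$. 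Note that the paper never verifies any such pointwise generator identity. Instead it realises $h$, up to constants, as quantities that are harmonic \emph{automatically}: for $\alpha\in(0,1)$ as the avoidance probability $\mP^x(T_{[-1,1]}=\infty)$, computed explicitly from the law of the point of closest reach to the origin; for $\alpha=1$ as the potential $U^-_{\sgn(x)}(\log|x|)$ of the descending ladder MAP in the Lamperti--Kiu representation, whose harmonicity for the MAP killed on $(-\infty,0]$ (Dereich et al.) is transported through the time change $\varphi_t$; and for $\alpha\in(1,2)$ by showing, via the Riesz--Bogdan--\.Zak transform and the law of the point of furthest reach, that $|x|^{1-\alpha}h(x)$ is proportional to $\mP^{\circ,x}(T_{[-1,1]}=\infty)$, and then compounding Doob $h$-transforms. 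Your shortcut for $\alpha\in(0,1)$ does coincide structurally with the paper's argument, but even there the identification of $\mP^{\cdot}(T_{[-1,1]}=\infty)$ with $h$ up to a constant requires precisely such an explicit hitting law, which you do not supply.

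Your alternative ``structural'' route is not merely incomplete; as described it would produce the wrong function. First, for a non-symmetric stable process the inversion underlying Riesz--Bogdan--\.Zak yields the \emph{dual} process conditioned to avoid the origin, so after conjugating with the affine pieces of $\phi(x)=(x-1)/(x+1)$ the image of the interval-killed process is not the stable process killed on $(-\infty,0]$, but an $h$-transform of it carrying extra conditioning relative to the points $\phi^{-1}(\infty)=-1$ and $\phi(\pm\infty)=1$; for $\alpha\in(1,2)$, where points are not polar, the renewal function is not harmonic for that process. Second, and decisively: every factor you propose to fold in (Jacobian, the $|\cdot|^{\alpha-1}$ density, affine changes of variable) is a power of a linear function, so transporting a power function through $\phi$ can only ever produce something of the form $c(x-1)^a(x+1)^b$. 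No such function equals $h$: differentiating gives $(x-1)^{a-1}(x+1)^{b-1}\,[(a+b)x+(a-b)]$, and matching this with $\psi_{\alpha\rho}(x)=(x-1)^{\alpha\hat{\rho}-1}(x+1)^{\alpha\rho-1}$ forces the linear bracket either to be constant (so $a+b=\alpha=0$) or to cancel one singular factor (so $\alpha\hat{\rho}=1$ or $\alpha\rho=1$, i.e.\ one-sided jumps) --- all excluded here. What the transported renewal function actually gives is a \emph{different} extreme point of the cone of harmonic functions: since $\phi^{-1}(\infty)=-1$, ``drifting to $+\infty$'' for the image process pulls back to ``approaching $-1$ from outside without entering $[-1,1]$'', so your construction identifies the harmonic function associated with continuous absorption at the boundary point $-1$, not the function $h$ associated with avoiding the interval.
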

In the case $\alpha=1$ we can write the above harmonic function in explicit detail:
\begin{align}\label{precise}
	h(x) =  \int_1^{|x|} (z^2-1)^{-\frac{1}{2}} \, \dd z = \log\left(|x| + (x^2-1)^{\frac{1}{2}}\right),\qquad |x|>1.
\end{align}
The next result addresses  our main motivation, namely to give a precise meaning to conditioning the stable processes to avoid an interval and to characterise the resulting process. Combining the harmonic functions with classical results of Blumenthal et al. \cite{Blu_Get_Ray_01} and Port \cite{Por_01} we can prove that the usual conditioning procedure works and at the same time identify the conditioned processes as $h$-transforms of the killed processes with the harmonic functions from Theorem \ref{thm_harmonic}.
\begin{theorem}\label{thm_cond}
Let ${X}$ be a two-side stable process with $\alpha \in (0,2)$, then
\begin{align*}
	\lim_{s \rightarrow \infty} \mP^x(\Lambda \,|\, t+s < T_{[-1,1]}) = \mP^{\updownarrow,x}(\Lambda) := \mE^x \la \1_\Lambda \1_{\lc t < T_{[-1,1]} \rc} \frac{h({X}_t)}{h(x)}  \ra \end{align*}
for $t\geq 0$, $x\notin [-1,1]$ and $\Lambda \in \cF_t$.
\end{theorem}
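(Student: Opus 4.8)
The plan is to follow the classical route for conditioning via a limiting procedure: use the Markov property to rewrite the conditional probability as the expectation of a likelihood ratio, establish pointwise convergence of that ratio to $h(X_t)/h(x)$ by means of the tail asymptotics of the hitting time, and finally justify the interchange of limit and expectation. Fix $t \geq 0$, $x \notin [-1,1]$ and $\Lambda \in \cF_t$. On the event $\{t < T_{[-1,1]}\}$ the process has not entered $[-1,1]$ by time $t$, so an application of the Markov property at time $t$ yields
\begin{align*}
\mP^x(\Lambda \,|\, t+s < T_{[-1,1]}) = \mE^x\!\left[ \1_\Lambda \1_{\{t < T_{[-1,1]}\}} \, \frac{\mP^{X_t}(s < T_{[-1,1]})}{\mP^x(t+s < T_{[-1,1]})} \right].
\end{align*}
Writing $R_s := \1_{\{t < T_{[-1,1]}\}}\,\mP^{X_t}(s < T_{[-1,1]})/\mP^x(t+s < T_{[-1,1]})$, the same Markov computation with $\Lambda$ dropped shows that $R_s \geq 0$ and $\mE^x[R_s] = 1$ for every $s$.

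The heart of the argument is to show that, for each fixed $y \notin [-1,1]$,
\begin{align*}
\lim_{s \to \infty} \frac{\mP^y(s < T_{[-1,1]})}{\mP^x(t+s < T_{[-1,1]})} = \frac{h(y)}{h(x)},
\end{align*}
which forces $R_s \to \1_{\{t < T_{[-1,1]}\}}\, h(X_t)/h(x)$ pointwise $\mP^x$-almost surely. I would split this according to the two parameter regimes. For $\alpha \in (0,1)$ the process is transient, so $\mP^y(s < T_{[-1,1]}) \downarrow \mP^y(T_{[-1,1]} = \infty)$ as $s \to \infty$; here I would identify $h$, up to a multiplicative constant, with the escape probability $y \mapsto \mP^y(T_{[-1,1]} = \infty)$ computed from the deep-factorisation hitting probabilities, so that the ratio converges at once. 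For $\alpha \in [1,2)$ the process is recurrent and $\mP^y(s < T_{[-1,1]}) \to 0$; here I would invoke the first-hitting results of Blumenthal et al. \cite{Blu_Get_Ray_01} and Port \cite{Por_01} to obtain an asymptotic of the form $\mP^y(s < T_{[-1,1]}) \sim c\, h(y)\, u(s)$ with spatial factor proportional to $h$ and a regularly varying $u$. Regular variation gives $u(s)/u(t+s) \to 1$, and factoring the ratio through $u(s)$ and $u(t+s)$ produces the stated limit.

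Finally, I would interchange limit and expectation via \emph{Scheff\'e's lemma} rather than a crude domination bound. Since each $R_s \geq 0$ satisfies $\mE^x[R_s] = 1$, and the harmonicity of $h$ from Theorem \ref{thm_harmonic} gives $\mE^x[\1_{\{t < T_{[-1,1]}\}}\, h(X_t)/h(x)] = 1$ as well, the established pointwise convergence together with the matching of the total masses forces $R_s \to \1_{\{t < T_{[-1,1]}\}}\, h(X_t)/h(x)$ in $L^1(\mP^x)$. Consequently, for every $\Lambda \in \cF_t$,
\begin{align*}
\left| \mE^x[\1_\Lambda R_s] - \mE^x\!\left[ \1_\Lambda \1_{\{t < T_{[-1,1]}\}} \frac{h(X_t)}{h(x)} \right] \right| \leq \mE^x\!\left[ \left| R_s - \1_{\{t < T_{[-1,1]}\}}\tfrac{h(X_t)}{h(x)} \right| \right] \longrightarrow 0,
\end{align*}
which is exactly the asserted identity with $\mP^{\updownarrow,x}$. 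The main obstacle I anticipate is the second step: establishing the precise regularly varying tail asymptotics of $\mP^y(s < T_{[-1,1]})$ sharply enough to read off the spatial leading term as exactly the harmonic function $h$ of Theorem \ref{thm_harmonic}. In the recurrent regime this requires a careful extraction from the classical hitting distributions, while in the transient regime it amounts to matching $h$ with the escape probability; Scheff\'e's lemma then cleanly absorbs the remaining uniformity issues that a direct dominated-convergence approach would make delicate.
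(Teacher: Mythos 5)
Your overall architecture coincides with the paper's: the proof rests on exactly the two ingredients the paper isolates, namely a tail asymptotic of the form $\lim_{s\to\infty}f(s)\,\mP^y(s<T_{[-1,1]})=c\,h(y)$ and the harmonicity of $h$ from Theorem \ref{thm_harmonic}, combined through the Markov property at time $t$. Your closing step via Scheff\'e's lemma is a valid and rather clean variant of what the paper does: the paper applies Fatou's lemma to $\Lambda$ and to $\Lambda^{\mathrm{C}}$ and then uses that $\mP^{\updownarrow,x}$ has total mass one, which is exactly where harmonicity enters; your use of harmonicity to match $\mE^x[R_s]=1$ with $\mE^x\big[\1_{\{t<T_{[-1,1]}\}}h({X}_t)/h(x)\big]=1$ plays the identical role, so the two endings are essentially equivalent maneuvers. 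You also correctly recognize, as the paper stresses, that the tail asymptotics alone would only give excessivity, and that Theorem \ref{thm_harmonic} must supply harmonicity.

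The genuine gap is in the regime $\alpha\in(1,2)$, and it is precisely the point you flag as the ``main obstacle'' but do not resolve. Port's theorem does not deliver an asymptotic of the form $\mP^y(s<T_{[-1,1]})\sim c\,h(y)\,u(s)$ with spatial factor visibly proportional to $h$; it delivers
\begin{align*}
\lim_{s\to\infty} s^{1-\frac{1}{\alpha}}\,\mP^y(s<T_{[-1,1]})\;=\;c_{\alpha\rho}\lim_{z\to\infty}u^{[-1,1]}(y,z),
\end{align*}
where the spatial factor is the limit, in the second variable, of the potential density of the killed process. Identifying this limit with $h$ is a nontrivial computation which the paper carries out as Lemma \ref{la}: one inserts Profeta and Simon's explicit formula \eqref{PS} for $u^{[-1,1]}$ and applies l'H\^opital's rule twice, together with an integration-by-parts identity for $\psi_{\alpha\hat{\rho}}$, to conclude $\lim_{z\to\infty}u^{[-1,1]}(y,z)=h(y)$ up to a multiplicative constant. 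Without this, the pointwise convergence of your ratio $R_s$ has no input in the recurrent regime $\alpha>1$, and the Scheff\'e argument cannot start. (For $\alpha=1$ there is no such issue, since Blumenthal et al.\ give the spatial factor explicitly as $\log\big(|y|+(y^2-1)^{1/2}\big)$, which is \eqref{precise}; and for $\alpha<1$ your identification of $h$ with the escape probability is exactly the paper's.) So: same route, an acceptable alternative final step, but the hardest technical ingredient for $\alpha\in(1,2)$ is asserted rather than proved.
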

The reader will note  that conditioning to avoid an interval is  much simpler when  $\alpha<1$ thanks to transience of the stable process. Indeed, the conditioning is a conditioning on  a positive probability event. Moreover,   the probability is exactly proportional to the harmonic function $h$ from \eqref{thm_harmonic}.

\smallskip

As part of our characterisation of the conditioned process  we show that it is transient. This is in analogy to L\'evy processes conditioned to be positive which almost surely drift to infinity, see Chaumont and Doney \cite{Chau_Don_01}.
\begin{theorem}\label{thm_trans}
Let ${X}$ be  stable process with both positive and negative jumps  with $\alpha \in (0,2)$, then the conditioned process is transient in the sense that
\begin{align*}
	\mP^{\updownarrow,x}\Bigg( \int_0^\infty \1_{\lc {X}_t \in K \rc} \, \dd t < +\infty\Bigg)=1,\qquad |x|>1,
\end{align*}
for all compact subsets $K$ of $\mR \backslash [-1,1]$.
\end{theorem}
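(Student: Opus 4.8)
The plan is to deduce transience from the stronger statement that the conditioned process drifts to infinity, i.e. that $|{X}_t|\to\infty$ as $t\to\infty$, $\mP^{\updownarrow,x}$-almost surely. Granting this, fix a compact $K\subseteq\mR\backslash[-1,1]$ and choose $M$ with $K\subseteq\{y:1<|y|\le M\}$; since $|{X}_t|\to\infty$, there is a $\mP^{\updownarrow,x}$-a.s.\ finite random time after which $|{X}_t|>M$, so $\{t\ge 0:{X}_t\in K\}$ is a.s.\ bounded and hence of finite Lebesgue measure. I would establish $|{X}_t|\to\infty$ by treating the recurrent regime $\alpha\in[1,2)$ and the transient regime $\alpha\in(0,1)$ separately.

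For $\alpha\in[1,2)$ the key observation is that $1/h$ is excessive for the conditioned semigroup. Applying the identity ${\texttt P}_t^\updownarrow f=h^{-1}{\texttt P}_t^{[-1,1]}(fh)$ with $f=1/h$ gives ${\texttt P}_t^\updownarrow(1/h)(x)=h(x)^{-1}\mP^x(t<T_{[-1,1]})\le 1/h(x)$, so that $(1/h({X}_t))_{t\ge 0}$ is a nonnegative c\`adl\`ag $\mP^{\updownarrow,x}$-supermartingale (well defined and positive since the conditioned process never enters $[-1,1]$). By the supermartingale convergence theorem it converges $\mP^{\updownarrow,x}$-a.s.\ to a finite limit $L\ge 0$. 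To identify $L$, note that $\mE^{\updownarrow,x}[1/h({X}_t)]=h(x)^{-1}\mP^x(t<T_{[-1,1]})\to h(x)^{-1}\mP^x(T_{[-1,1]}=\infty)=0$, the last equality being the recurrence of ${X}$ recalled in the introduction. Fatou's lemma then forces $\mE^{\updownarrow,x}[L]=0$, hence $L=0$, i.e.\ $h({X}_t)\to\infty$. Finally, the tail behaviour $\psi_{\alpha\rho}(z)\sim z^{\alpha-2}$ shows $h(y)\to\infty$ as $|y|\to\infty$ while $h$ stays bounded on $\{y:1<|y|\le M\}$ for each $M$; since $h$ is increasing in $|y|$, $h({X}_t)\to\infty$ is equivalent to $|{X}_t|\to\infty$, as required.

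For $\alpha\in(0,1)$ the stable process is transient, so $\mP^x(T_{[-1,1]}=\infty)>0$ and, by the recurrence/transience dichotomy for L\'evy processes, $|{X}_t|\to\infty$ holds $\mP^x$-a.s. Because $\{t+s<T_{[-1,1]}\}\downarrow\{T_{[-1,1]}=\infty\}$ as $s\to\infty$, dividing by $\mP^x(t+s<T_{[-1,1]})\to\mP^x(T_{[-1,1]}=\infty)>0$ shows that the limit in Theorem \ref{thm_cond} is nothing but the conditional law $\mP^x(\,\cdot\mid T_{[-1,1]}=\infty)$ on each $\cF_t$; hence $\mP^{\updownarrow,x}$ and $\mP^x(\,\cdot\mid T_{[-1,1]}=\infty)$ agree on the generating algebra $\bigcup_t\cF_t$ and therefore on $\cF_\infty$. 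Since $\{|{X}_t|\to\infty\}$ is an $\cF_\infty$-event of full $\mP^x$-probability and $\{T_{[-1,1]}=\infty\}$ has positive probability, it retains full probability under the conditional law, giving $|{X}_t|\to\infty$ $\mP^{\updownarrow,x}$-a.s.

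The main obstacle is the recurrent regime, and within it the identification $L=0$: everything hinges on coupling the supermartingale limit to the fact that the killed process satisfies $\mP^x(T_{[-1,1]}=\infty)=0$, which is precisely where recurrence of ${X}$ for $\alpha\ge 1$ enters. The remaining points—excessiveness of $1/h$, the Fatou step, and translating $h({X}_t)\to\infty$ into $|{X}_t|\to\infty$ via the monotonicity and growth of $h$—are routine once this is in place.
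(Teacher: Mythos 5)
Your proposal is correct, but it takes a genuinely different route from the paper. The paper appeals to Getoor's criterion for transience of a regular Markov process: it suffices to check that $\mE^{\updownarrow,x}\la \int_0^\infty \1_{\lc X_t\in[-d,d]\rc}\,\dd t \ra<\infty$ for every $d>1$, and this expectation is bounded, via the change of measure, by $\sup_{y\in[-d,d]\backslash[-1,1]}\big(h(y)/h(x)\big)\,U_{[-1,1]}(x,[-d,d])$, whose finiteness is read off from Profeta and Simon's explicit formula for the killed potential density $u^{[-1,1]}(x,y)$; this treats all $\alpha\in(0,2)$ at one stroke. You instead prove the stronger pathwise statement that $|X_t|\to\infty$ holds $\mP^{\updownarrow,x}$-almost surely, splitting the parameter range. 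Your supermartingale argument for $\alpha\in[1,2)$ is sound, and is in fact even more immediate than you present it: for $\Lambda\in\cF_t$, the change of measure \eqref{def_htrafo} gives $\mE^{\updownarrow,x}\la \1_\Lambda/h(X_{t+s})\ra=\mP^x(\Lambda,\,t+s<T_{[-1,1]})/h(x)$, which is non-increasing in $s$, so both the supermartingale property and the key identity $\mE^{\updownarrow,x}\la 1/h(X_t)\ra=\mP^x(t<T_{[-1,1]})/h(x)\to 0$ (by recurrence) follow without needing to extend the semigroup identity ${\texttt P}_t^{\updownarrow}f=h^{-1}{\texttt P}_t^{[-1,1]}(fh)$, stated in the paper only for bounded $f$, to the unbounded function $f=1/h$ (alternatively, monotone convergence does this extension). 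The passage from $h(X_t)\to\infty$ to $|X_t|\to\infty$ via monotonicity of $h$ in $|y|$ and $h(y)\to\infty$ as $|y|\to\infty$ is also valid, and it is worth stressing that it works precisely because $\alpha\geq 1$ makes $\psi_{\alpha\rho}$ non-integrable at infinity; for $\alpha<1$ the function $h$ is bounded, which is exactly why your case split is forced. Your $\alpha\in(0,1)$ argument, identifying $\mP^{\updownarrow,x}$ with $\mP^x(\cdot\,|\,T_{[-1,1]}=\infty)$ on $\cF_\infty$ through agreement on the generating $\pi$-system $\bigcup_t\cF_t$ and inheriting $|X_t|\to\infty$ from transience of the free process, is likewise correct. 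Two routine points you should make explicit: the continuous-time supermartingale convergence theorem requires right-continuity of $t\mapsto 1/h(X_t)$, which holds because under $\mP^{\updownarrow,x}$ the paths are c\`adl\`ag and remain in the open set $\lc |y|>1\rc$ where $h$ is continuous and strictly positive; and the change-of-measure identity for nonnegative unbounded integrands follows from \eqref{def_htrafo} by monotone convergence. Comparing what each approach buys: yours is more self-contained (no Getoor, no explicit killed potential density, no integrability verification) and yields the stronger almost-sure drift to infinity, from which the occupation-time statement is immediate; the paper's yields the complementary, quantitatively stronger conclusion that the \emph{expected} occupation time of compacts is finite, and requires no case distinction in $\alpha$.
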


\section{Preliminaries}\label{sec:pre}
The methods to prove our two main theorems differ for the three parameter regimes $\alpha\in (0,1)$, $\alpha=1$ and $\alpha\in (1,2)$, where different background machinery will be employed.\smallskip

\textbf{Techniques for $\alpha\in (0,1)$:} Transience for $\alpha<1$ implies that $\lc T_{[-1,1]} = +\infty \rc$ has strictly positive probability, irrespective of the point of issue. We apply a recent formula from Kyprianou et al. \cite{Kyp_Riv_Sen_01}, which gives the  the law of the  point of closest reach of $0$ of the stable process. From the law of the point of closest reach we can precisely compute   $h$ which, up to a multiplicative constant, is equal to $\mathbb{P}^x(T_{[-1,1]} = +\infty)$, $x\in\mathbb{R}\backslash [-1,1]$. Since here we condition on a positive probability event the limit theorem is simple.
\smallskip

\textbf{Techniques for $\alpha=1$:} We employ the fact that stable processes are self-similar Markov processes and self-similar Markov processes can be represented as time-space transformation of Markov additive processes (MAPs). The so-called Lamperti-Kiu representation has been subject of intensive research in the past few years.\smallskip

 A Markov process $X=(X_t)_{t \geq 0}$ killed when first hitting $0$ is called a self-similar Markov process (ssMp) if
$$((c X_{c^{-\alpha}t})_{t \geq 0},\mP^x) \overset{(d)}{=} ((X_{t})_{t \geq 0},\mP^{cx})$$
for all $c>0$ and all $x \in \mR$, where $\alpha$ is the index of self-similarlity. A Markov additive process (MAP) with respect to a filtration $(\cG_t)_{t \geq 0}$ is a $\mR\times E$-valued c\`adl\`ag process $(\xi,J) = (\xi_t,J_t)_{t \geq 0}$, where $E$ is a finite space, $J$ is a Markov chain in $E$ and the following holds for all $i \in E$ and $s,t \geq 0$: given $\{J_t = i\}$, the pair $(\xi_{t+s}-\xi_t,J_{t+s})$ is independent of $\mathcal{G}_t$ and has the same distribution as $(\xi_{s}-\xi_0,J_{s})$ given $\lc J_0 = i \rc$. Asmussen \cite{Asm_01} provides the following alternative characterisation of MAPs in terms of L\'evy  processes, which helps us to get a clear image of the behavior of MAPs. The pair $(\xi,J)$ is a MAP if and only if for all $i \in E$ there is an iid sequence $(\xi^{n,i})_{n \in \mathbb{N}}$ of L\'evy  processes and for all $i,j \in E$ there is an iid sequence of random variables $(U^n_{i,j})_{n \in \mathbb{N}}$, independent of $J$ such that for the jump times $(\sigma_n)_{n \in \mathbb{N}}$ of $J$ and the convention $\sigma_0=0$ it holds that
$$\xi_t = \1_{\left\lbrace n \geq 1 \right\rbrace} \xi_{\sigma_n-} + U^n_{J_{\sigma_n}-,J_{\sigma_n}} +  \xi^{n,J_{\sigma_n}}_{t-\sigma_n}, \qquad t \in [\sigma_n,\sigma_{n+1}),\, n \geq 0.$$
In words, this characterisation shows that the dynamics of a MAP are governed by a set of L\'evy processes with different characteristics and the choice of the L\'evy process is governed by an underlying Markov chain.\smallskip

 Given a self-similar Markov process $X$ which starts from $x \in \mR \backslash \lc 0 \rc$, Chaumont et al. \cite{Chau_Pan_Riv_01} proved that there exists a MAP $(\xi,J)$ on $\mR \times \lc \pm 1 \rc$ which starts from $(\log|x|,\sgn(x))$ such that
$$X_t = J_{\varphi_t} \exp\left(\xi_{\varphi_t} \right),\qquad 0 \leq t < T_0,$$
where $\varphi_t = \inf\{ s>0: \int_0^s \exp(\alpha \xi_u) \, \dd u > t \}$ and $T_0 = \inf\left\lbrace t \geq 0 : X_t = 0 \right\rbrace$.
The transformation offers the opportunity to  obtain results for ssMps from the reservoir of proved and potentially provable results
for MAPs as long as the time-change can be controlled. As an example, the 0-1 law
for the longtime behavior of a MAP readily implies a 0-1 law for the extinction of a
ssMp. That is:
\begin{enumerate}[(i)]
\item $T_0 = +\infty$ a.s. if and only if the underlying MAP $(\xi,J)$ oscillates or drifts to $+\infty$.
\item $T_0 < +\infty$ a.s. if and only if the underlying MAP $(\xi,J)$ drifts to $-\infty$ or is killed.
\end{enumerate}

\smallskip

In the appendix of Dereich et al. \cite{Der_Doer_Kyp_01} the authors analysed questions on fluctuation theory for MAPs. In particular, analogously to L\'evy processes, it was proved that the potential function $U^-$ of the ascending ladder height process (see Section \ref{a=1} below) of the so-called dual process to the MAP is harmonic for the MAP killed on entering the negative half-line when it oscillates or drifts to $+\infty$. 

\smallskip

Since stable processes are ssMps, the Lamperti-Kiu transform is applicable to the stable process ${X}$. The underlying MAP was characterised in Kyprianou \cite{Kyp_02} and Kyprianou et al. \cite{Kyp_Riv_Sen_01}. For $\alpha=1$ it holds that $T_0 = +\infty$ almost surely (which is a consequence of the fact that all points  are polar for the Cauchy process), hence, the underlying MAP oscillates or drifts to $+\infty$ (in fact the former is the case). In \cite{Kyp_Riv_Sen_01} the authors were able to calculate explicit densities for the aforementioned harmonic function of the MAP underlying the Cauchy process. Since the Lamperti-Kiu transform tells us that killing the MAP on the negative half-line is equivalent to killing the stable process in $[-1,1]$, an appropriate spatial transform of this MAP turns out to be the  the key to prove Theorem \ref{thm_harmonic} in the Cauchy setting. 

\smallskip


\textbf{Techniques for $\alpha\in (1,2)$:} For $\alpha>1$ the stable process visits points almost surely, irrespective of its point of issue. We consider the stable process killed on hitting $0$. From Kyprianou et al. \cite{Kyp_Riv_Sat_01} (for general self-similar Markov processes) or Pantí \cite{Pan_01} and Yano \cite{Yan_01} (for general L\'evy  processes) we know that $ |x|^{\alpha-1}$, $x \neq 0$, is harmonic for the killed transition semigroup
$${\texttt P}^{\dag}_t f(x) := \mE^{\dag,x} \la f({X}_t) \ra := \mE^x \la f({X}_t) \1_{\lc t \leq T_0\rc} \ra,$$
where $T_0 := \inf\lc t \geq 0: {X}_t =0 \rc$.  Moreover, its corresponding Doob $h$-transform with $|x|^{\alpha-1}$, $x\neq0$, corresponds to  the stable process conditioned to avoid $0$ in a sense that also conforms to the general notion highlighted in  \eqref{cond}. We will show that $h^\circ(x): =  |x|^{1-\alpha} h(x), |x|>1,$ with $h$ defined in Theorem \ref{thm_harmonic}, is harmonic for the stable process conditioned to avoid the origin and killed on entering $[-1,1]$. In that case, by the compounding effect of Doob $h$-transforms,  it must be the case that  $h(x)=|x|^{\alpha-1}h^\circ(x)$ is harmonic for ${X}$ killed on entering $[-1,1]$. The tool we use to show harmonicity of $h^\circ(x)$ for the stable process conditioned to avoid $0$ and killed on entering $[-1,1]$ is the Riesz--Bogdan--\.Zak transform. To state the transformation let
$$\mP^{\circ,x}(\Lambda) \coloneqq \mE^x \la \1_\Lambda \1_{\lc t < T_{\lc 0 \rc} \rc} \frac{|{X}_t|^{\alpha-1}}{|x|^{\alpha-1}} \ra, \quad \Lambda \in \cF_t,
t\geq 0,
$$
be the law of the stable process conditioned to avoid $0$. The next Theorem was proved in Bogdan and \.Zak \cite{Bog_Zak_01} for symmetric stable processes, in Kyprianou \cite{Kyp_02} for general stable processes and in Alili et al. \cite{Ali_Cha_Gra_Zak_01} for self-similar Markov processes.
\begin{theorem}[Riesz--Bogdan--\.Zak transform] 
Let ${X}$ be a two-side stable process with $\alpha \in (0,2)$ and
$$\eta_t = \inf \Big\{ s>0 :  \int_0^s |{X}_u|^{-2\alpha} \1_{\lc u < T_{ 0 } \rc} \, \dd u > t \Big\},\qquad t < \int_0^\infty |{X}_u|^{-2\alpha} \1_{\lc u < T_{ 0} \rc} \, \dd u.$$
Then, for all $x \neq 0$, it holds that
$$\Big(\Big(\frac{1}{{X}_{\eta_t}}\Big)_{t \geq 0}, \hat{\mP}^x\Big) \overset{(d)}{=} ({X},\mP^{\circ,\frac{1}{x}}),$$
where $\hat{\mP}^x$ is the law of the dual stable process $-{X}$ issued from $x$.
\end{theorem}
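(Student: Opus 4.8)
The plan is to reduce the identity to a statement about the Markov additive process (MAP) underlying the stable process via the Lamperti--Kiu representation, since this framework is uniform across $\alpha\in(0,2)$ and renders the time change transparent. Under $\hat{\mP}^x$ the canonical process is the dual stable process; denote by $(\xi,J)$ its underlying MAP on $\mR\times\lc\pm1\rc$, so that $X_t=J_{\varphi_t}\exp(\xi_{\varphi_t})$ for $0\le t<T_0$, with $\varphi_t=\inf\lc s:\int_0^s\ec^{\alpha\xi_u}\,\dd u>t\rc$. The first step is to re-express the additive functional defining $\eta$ in MAP time. Substituting the Lamperti clock via $u=\int_0^r\ec^{\alpha\xi_v}\,\dd v$, so that $\dd u=\ec^{\alpha\xi_r}\,\dd r$ and $\varphi_u=r$, and using $|X_u|=\ec^{\xi_{\varphi_u}}$, one finds
\[
\int_0^s|X_u|^{-2\alpha}\1_{\lc u<T_0\rc}\,\dd u=\int_0^{\varphi_s}\ec^{-\alpha\xi_r}\,\dd r,
\]
which is precisely the Lamperti clock associated with the negated ordinate $-\xi$.

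Since $\eta$ is the right inverse of the functional on the left, $A_{\eta_t}=t$ forces $\varphi_{\eta_t}$ to be the right inverse of $r\mapsto\int_0^r\ec^{-\alpha\xi_v}\,\dd v$. As $J\in\lc\pm1\rc$ gives $J^{-1}=J$, we obtain
\[
\frac{1}{X_{\eta_t}}=J_{\varphi_{\eta_t}}\exp\bigl(-\xi_{\varphi_{\eta_t}}\bigr),
\]
which is exactly the Lamperti--Kiu representation of the self-similar Markov process whose driving MAP is $(-\xi,J)$ and which starts from $1/x$. Thus, under $\hat{\mP}^x$, the inverted and time-changed process is a self-similar Markov process whose MAP is the sign-reversed dual MAP $(-\xi,J)$.

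The second step is to identify this MAP with the one underlying $(X,\mP^{\circ,1/x})$. On the MAP side the inversion $x\mapsto1/x$ sends $\log|x|\mapsto-\log|x|$ and preserves $\sgn(x)$, matching the ordinate sign-reversal found above. The conditioning to avoid the origin, defined by the Doob $h$-transform with $h(x)=|x|^{\alpha-1}$, corresponds on the MAP side to weighting paths by $\ec^{(\alpha-1)\xi_t}$ against the appropriate harmonic normalisation, i.e.\ to an Esscher-type change of measure of $(\xi,J)$. The key identity to establish is therefore a MAP analogue of the classical time-reversal duality: reversing the sign of the ordinate of the MAP of the dual process yields precisely this Esscher transform of the MAP of the original process conditioned to avoid $0$. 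This can be verified from the explicit matrix exponent of the stable MAP furnished by the deep factorisation, by checking equality of the Markov additive generators, equivalently of the transition kernels together with the invariant distributions of the modulating chains.

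Finally, I would transport the MAP identity back through the Lamperti--Kiu correspondence. As this correspondence is a bijection between self-similar Markov processes killed at $0$ and MAPs, up to the deterministic clock, equality in law of the two driving MAPs together with the matching of clocks secured in the first step gives equality in law of the path-valued processes, as claimed. I expect the main obstacle to be the second step: making rigorous the duality that interchanges the ordinate sign-reversal with the Esscher transform encoding the conditioning to avoid the origin. One is comparing two changes of measure, so the harmonic normalisations and the martingale property underpinning $\mP^\circ$ must be matched with care, and the argument must remain valid uniformly in $\alpha$, including the regimes $\alpha\le1$ where $0$ is polar and $\mP^\circ$ is only a limiting object. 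Controlling the composed time change as $t$ approaches the total mass of the additive functional, and near $T_0$, is a secondary point needed to ensure the transformed process is well defined on the whole half-line.
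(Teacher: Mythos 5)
Two things need to be said, one about the paper and one about your argument.

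First, the paper does not prove this theorem at all: it is imported from the literature, attributed to Bogdan and \.Zak \cite{Bog_Zak_01} (symmetric case), Kyprianou \cite{Kyp_02} (general stable processes) and Alili et al.\ \cite{Ali_Cha_Gra_Zak_01} (self-similar Markov processes). So there is no internal proof to compare against, and your proposal must be judged against those references; it does in fact follow their route. Your first step is correct and complete: substituting $u=\int_0^r \ec^{\alpha\xi_v}\,\dd v$ gives $\int_0^s|X_u|^{-2\alpha}\1_{\lc u<T_0\rc}\,\dd u=\int_0^{\varphi_s}\ec^{-\alpha\xi_r}\,\dd r$, so that $1/X_{\eta_t}$ is the $\alpha$-self-similar process driven by the MAP $(-\hat{\xi},\hat{J})$ issued from $(\log|1/x|,\sgn(1/x))$; this is precisely the ssMp-level statement of Alili et al.

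Second, the genuine gap: your step two is not a technical verification to be deferred, it \emph{is} the theorem, and carried out in the form you describe it fails for asymmetric processes. The Doob $h$-transform with the scalar weight $\ec^{(\alpha-1)\xi_t}$ (which is what $h(x)=|x|^{\alpha-1}$ translates to on the MAP side) is only legitimate if $\ec^{(\alpha-1)\xi_t}$ is a MAP martingale, i.e.\ if the vector $(1,1)$ lies in the kernel of the matrix exponent $\Psi(\alpha-1)$ of the Lamperti--Kiu MAP. Computing with the explicit exponent from the deep factorisation (see \cite{Kyp_02} or \cite{Kyp_Riv_Sen_01}), the row sums of $\Psi(\alpha-1)$ are proportional to $\pm\big(\sin(\pi\alpha\hat{\rho})-\sin(\pi\alpha\rho)\big)$, and the kernel is spanned by $v=\big(\sin(\pi\alpha\hat{\rho}),\sin(\pi\alpha\rho)\big)$, attached to the states $+1,-1$. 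Hence the change of measure must carry the modulator-dependent weights, $\ec^{(\alpha-1)(\xi_t-\xi_0)}v_{J_t}/v_{J_0}$, which at the level of the stable process means the transform holds with $h(x)=\big(\sin(\pi\alpha\hat{\rho})\1_{\lc x>0\rc}+\sin(\pi\alpha\rho)\1_{\lc x<0\rc}\big)|x|^{\alpha-1}$; this is the form in the cited source \cite{Kyp_02}, and it matches the harmonic function of Pant\'i \cite{Pan_01} and Yano \cite{Yan_01}. With plain $|x|^{\alpha-1}$, as in the statement you were given, the identity holds verbatim only when $\rho=\hat{\rho}$ or $\alpha=1$; the weights cancel in the ratio $h(X_t)/h(x)$ only on the event that $X_t$ and $x$ have the same sign. (The same asymmetry is visible inside the paper itself: in Section 3.1.1 the constants relating $h$ to $\mP^x(T_{[-1,1]}=\infty)$ on the two sides of the interval differ by exactly the factor $\sin(\pi\alpha\hat{\rho})/\sin(\pi\alpha\rho)$.) Completing your proof therefore consists of exactly the entrywise Gamma-function check $\hat{\Psi}(-z)=\Delta_v^{-1}\Psi(z+\alpha-1)\Delta_v$, with $\Delta_v$ the diagonal matrix of the weights, and of transporting the resulting change of measure through the time change by optional stopping at the a.s.\ finite stopping times $\varphi_t$ (compare the appeal to Jacod--Shiryaev in Section 3.1.2 of the paper). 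On your final caveat: for $\alpha\le 1$ the measure $\mP^{\circ,x}$ is not ``a limiting object''--it is defined by the displayed change of measure for every $\alpha$--but for $\alpha<1$ the weighted function is purely excessive rather than invariant, $\mP^{\circ,x}$ is sub-Markovian, and correspondingly the clock $\int_0^\infty|X_u|^{-2\alpha}\,\dd u$ has a.s.\ finite total mass on the dual side; that, rather than polarity of the origin, is the regime-dependent care your last paragraph should be pointing at.
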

The Riesz--Bogdan--\.Zak transformation tells that, for $\alpha \geq 1$, the stable process conditioned to avoid $0$ is the spatial inverse of the dual process with a certain time-change. We will use this theorem to show that $\mP^{\circ,x}(T_{[-1,1]} = +\infty) >0$ and to calculate this probability explicitly using a very recent formula for the distribution of the point of furthest reach of a stable process prior to hitting the origin.

\section{Proofs}
\subsection{Proof of Theorem \ref{thm_harmonic}}
The proofs for the cases $\alpha\in (0,1)$, $\alpha=1$ and $\alpha\in (1,2)$ are completely different in nature but all rely on recent new explicit formulas for stable processes obtained through the Lamperti-Kiu representation.
\subsubsection{$\underline{\alpha \in (0,1)}$}\label{a<1}
Since ${X}$ is transient, $\mP^x(T_{[-1,1]} = \infty)>0$ for all $x \notin [-1,1]$ and the function
\begin{align*}
	g(x)= \mP^x(T_{[-1,1]} = \infty),\quad x\in \mR\backslash [-1,1],
\end{align*}
is strictly positive. The following standard argument shows that $g$ is a harmonic function for the process killed on entering $[-1,1]$:
\begin{align*}
\mE^x \la \1_{\lc t < T_{[-1,1]} \rc} g({X}_t) \ra &= \mE^x \la \1_{\lc t < T_{[-1,1]} \rc} \mP^{{X}_t}(T_{[-1,1]} = \infty) \ra \\
&= \lim_{s \rightarrow \infty} \mE^x \la \1_{\lc t < T_{[-1,1]} \rc} \mP^{{X}_t}(T_{[-1,1]} > s) \ra \\
&= \lim_{s \rightarrow \infty} \mP^x(T_{[-1,1]} > s+t) \\
&= \mP^{x}(T_{[-1,1]} = \infty) \\
&= g(x),
\end{align*}
where we used dominated convergence in the second equation and the Markov property in the third. It remains to show that $h(x)$ equals $\mP^x(T_{[-1,1]} = \infty)$ up to some multiplicative constant. Proposition 1.1 of Kyprianou et al. \cite{Kyp_Riv_Sen_01} gives us the distribution of the point of closest reach to $0$ of the stable process from which the probability of missing $[-1,1]$ can be computed readily. Define $\underline{m}$ as the unique time such that $|{X}_t| \geq |{X}_{\underline{m}}|$ for all $t \geq 0$ and let $x>1$, then using the aforesaid result in \cite{Kyp_Riv_Sen_01},
\begin{align*}
\mP^x(T_{[-1,1]} = \infty) &= \mP^x({X}_{\underline{m}} > 1) + \mP^x({X}_{\underline{m}} < -1)\\
&= \frac{\Gamma(1-\alpha\rho)}{\Gamma(1-\alpha)\Gamma(\alpha\hat{\rho})} \left( \int_1^x \frac{x+z}{(2z)^\alpha}(x-z)^{\alpha\hat{\rho}-1} (x+z)^{\alpha\rho-1} \, \dd z \right. \\
&\quad \left. \hspace{3cm}+  \int_{-x}^{-1} \frac{x+z}{(-2z)^\alpha}(x+z)^{\alpha\hat{\rho}-1} (x-z)^{\alpha\rho-1} \, \dd z \right)\\
&= \frac{2\Gamma(1-\alpha\rho)}{2^\alpha\Gamma(1-\alpha)\Gamma(\alpha\hat{\rho})} x  \int_1^x \frac{1}{z^2}\left(\frac{x}{z}-1\right)^{\alpha\hat{\rho}-1}\left(\frac{x}{z}+1\right)^{\alpha\rho-1} \, \dd z\\
&= \frac{2^{1-\alpha}\Gamma(1-\alpha\rho)}{\Gamma(1-\alpha)\Gamma(\alpha\hat{\rho})}  \int_1^x \left(u-1\right)^{\alpha\hat{\rho}-1}\left(u+1\right)^{\alpha\rho-1} \, \dd u\\
&= \frac{2^{1-\alpha}\Gamma(1-\alpha\rho)}{\Gamma(1-\alpha)\Gamma(\alpha\hat{\rho})}  \int_1^x \psi_{\alpha\rho}(u) \, \dd u\\
&= \frac{2^{1-\alpha}\Gamma(1-\alpha\rho)}{\Gamma(1-\alpha)\Gamma(\alpha\hat{\rho})} h(x),
\end{align*}
where in the third equality we have substituted $u = x/z$.
If $x<-1$ we apply duality to deduce
$$\mP^x({X}_{\underline{m}} \in \dd y) = \hat{\mP}^{-x}({X}_{\underline{m}} \in -\dd y),$$
from which an analogous calculation for $x<-1$ yields the claim.

\subsubsection{$\underline{\alpha = 1}$}\label{a=1}
Let $(\mathbf{P}^{x,i})_{x \in \mR, i \in \lc \pm 1 \rc}$ be the family of probability measures on the space of $\mR \times \lc \pm 1 \rc$-valued c\`adl\`ag paths under which the canonical process $(\xi,J)$ has the distribution of the Markov additive process (MAP) which underlies the stable process (seen as an ssMp) via the Lamperti-Kiu transform. More precisely, this means that, under $\mathbf{P}^{\log |x|,\sgn(x)}$, the transformation $(J_{\varphi_t} \exp(\xi_{\varphi_t}))_{t\geq 0}$ is a ssMp with law $\mP^x$, i.e. the stable process. 

\smallskip

We will need to introduce some terminology from  Dereich et al. \cite{Der_Doer_Kyp_01} in order to talk about the ladder height processes of $(\xi, J)$. To this end, let  $Y_t =\xi_t- \underline{\xi}_t$, $t\geq 0$, where  $\underline{\xi}_t = \inf_{s\leq t}\xi_s$. 
Following ideas that are well known from the theory of L\'evy processes, it is straightforward to show that, as a pair, the process $(Y,J)$ is a strong Markov process. It was shown in the Appendix of Dereich et al. \cite{Der_Doer_Kyp_01} that there exists a local time of $(Y, J)$ in the set $\{0\}\times\{-1,1\}$, say $L: = (L_t)_{t\geq 0}$. Moreover, the process  $({L}^{-1}, H^-, J^-): = ({L}^{-1}_t, H^-_t, J^-_t)_{t\geq 0}$ is a (possibly killed)  Markov additive bivariate subordinator (meaning that it is a possibly killed MAP for which the components $({L}^{-1}_t)_{t\geq 0}$ and $(H^-_t)_{t\geq 0}$ are increasing), where	
\[
H^-_t : = \xi_{{L}^{-1}_t}\text{ and } J^-_t : = J_{{L}^{-1}_t}, \qquad \text{ if } {L}^{-1}_t<\infty,
\]
and $H^-_t : = \Delta$ and $J^-_t : = \Delta$ otherwise, for some cemetery state $\Delta$.

\smallskip

The process $(H^-,J^-)$ is called the descending ladder MAP. Next define
$$U^-_{i,j}(x) = \mathbf{E}^{0,i} \left\lbrack  \int_0^\infty \1_{\left\lbrace H^-_t \leq x,J^-_t = j \right\rbrace} \, \dd t \right\rbrack$$
which we call potential function of $(H^-,J^-)$ and
$$U^-_i(x) := U^-_{i,-1}(x) +  U^-_{i,1}(x) = \mathbf{E}^{0,i} \left\lbrack  \int_0^\infty \1_{\left\lbrace H^-_t \leq x \right\rbrace} \, \dd t \right\rbrack.$$
In the case $\alpha=1$ the stable process ${X}$ does not hit points, i.e. $T_0 = +\infty$ almost surely. It follows from the Lamperti-Kiu transfrom that the underlying MAP oscillates or drifts to $+\infty$ and in this case the function
$$(x,i) \mapsto U^-_i(x)$$
is harmonic for the MAP killed on entering the negative half-line, i.e.
$$\mathbf{E}^{x,i} \la \1_{\lc t < \tau_{(-\infty,0]} \rc} U^-_{J_t}(\xi_t) \ra = U^-_{i}(x)$$
for all $x >0$, where $\tau_{(-\infty,0]} \coloneqq \inf\lc t \geq 0: \xi_t \leq 0 \rc$ (see Theorem 29 of Dereich et al. \cite{Der_Doer_Kyp_01}). Using this we see
\begin{align*}
&\quad \mE^{x} \la \1_{\lc t < T_{[-1,1]} \rc} U^-_{\sgn({X}_t)}(\log |{X}_t|) \ra \\
=& \, \mathbf{E}^{\log |x|,\sgn(x)} \la \1_{\lc \varphi_t < \tau_{(-\infty,0]} \rc} U^-_{\sgn(J_{\varphi_t} \exp(\xi_{\varphi_t} ))}(\log |J_{\varphi_t} \exp\left(\xi_{\varphi_t} \right)|) \ra \\
=& \,\mathbf{E}^{\log |x|,\sgn(x)} \la \1_{\lc \varphi_t < \tau_{(-\infty,0]} \rc} U^-_{\sgn(J_{\varphi_t})}(\xi_{\varphi_t}) \ra.
\end{align*}
Denote the natural enlargement of the filtration induced by $(\xi,J)$ by $(\cG_t)_{t \geq 0}$.
An important fact that we shall shortly use (and is easy to verify) is that for all $t \geq 0$ the time-change $\varphi_t$ is a stopping-time with respect to $(\cG_v)_{v \geq 0}$ which is almost surely finite with respect to $\mathbf{P}^{x,i}$, for all $x\in\mathbb{R}$ and $i\in\{-1,1\}$.  To understand why the latter is true, note that the symmetric and oscillatory behaviour of  ${X}$ in the current regime of $\alpha$ ensures that $\limsup_{t\to\infty}\xi_t = \infty$ almost surely, irrespective of the point of issue of $(\xi, J)$.
This, in turn, means that $\lim_{t\to\infty} \int_0^t{\rm e}^{\alpha \xi_s}\dd s = \infty$ in the same almost sure sense; see for example the discussion of the different properties of stable processes and their underlying MAPs  in Kyprianou \cite{ALEA_kyp}. Consequently, as $(\varphi_t, t\geq 0)$ is the inverse of the aforementioned integrated exponential of $\xi$, it follows that, for each $t\geq 0$, $x\in\mathbb{R}$ and $i\in\{-1,1\}$ we have that $\varphi_t<\infty$, $\mathbf{P}^{x,i}$-almost surely. It now follows from Theorem III.3.4 of Jacod and Shiryaev \cite{JS} that 
\[
\mathbf{E}^{\log |x|,\sgn(x)} \la \1_{\lc \varphi_t < \tau_{(-\infty,0]} \rc} U^-_{\sgn(J_{\varphi_t})}(\xi_{\varphi_t}) \ra =
U_{\sgn(x)}(\log|x|),
\]
and hence
$U^-_{\sgn(x)}(\log |x|)$, $|x|>1$, is a harmonic function for the stable process killed on entering $[-1,1]$.\smallskip

\smallskip

 To finish the proof of Theorem \ref{thm_harmonic} we have to show that
 \begin{align*}
 	h(x)=U^-_{\sgn(x)}(\log |x|),\qquad x\in \mR \backslash [-1,1],
 \end{align*} 
 up to a multiplicative constant, where $h$ is the explicit function from Theorem \ref{thm_harmonic}. Kyprianou et al. \cite{Kyp_Riv_Sen_01}, Corollary 1.6, found explicit densities for $U^-_{i}(\dd x)$. Using these results we have, for $x>1$, that 
\begin{align*}
U^-_{\sgn(x)}(\log |x|) 
&=   \int_0^{\log x} (1-{\rm e}^{-z})^{-\frac{1}{2}} (1+{\rm e}^{-z})^{\frac{1}{2}}  + (1-{\rm e}^{-z})^{\frac{1}{2}} (1+{\rm e}^{-z})^{-\frac{1}{2}} \, \dd z \\
&= 2 \int_1^{x} (u^2-1)^{-\frac{1}{2}}\, \dd u\\
&= 2 h(x).
\end{align*}
By symmetry the claim for $x<-1$ follows analogously.

\subsubsection{$\underline{\alpha \in (1,2)}$}
The idea of the argument is as follows. The multiplication of harmonic functions corresponding to the concatenation of $h$-transforms gives a new harmonic function. In our setting, recall from Section \ref{sec:pre} that $ |x|^{\alpha-1}, x\neq 0,$ is harmonic for $\mP^{\dag,x}$ and the $h$-transformed process delivers the Markov probabilities  $\mP^{\circ,x}$, $x\neq 0$. We will show that $h^\circ(x): =  |x|^{1-\alpha} h(x)$, $x\in\mathbb{R}\backslash [-1,1]$, with $h$ defined in Theorem \ref{thm_harmonic},  is the probability of avoiding $[-1,1]$ for $\mP^{\circ,x}$ ($\mP^{\circ,x}$ is transient) and as a consequence is harmonic for $\mP^{\circ,x}$ killed in $[-1,1]$:
\begin{align*}
	\mP^{x} \stackrel{\text{killing at 0, $h$-transform with }|x|^{\alpha-1}}{\xrightarrow{\hspace*{4.8cm}}}\mP^{\circ,x}\stackrel{\text{killing in [-1,1], $h$-transform with }|x|^{1-\alpha}h(x)}{\xrightarrow{\hspace*{6cm}}} \mP^{\updownarrow,x}.
\end{align*}
From this idea it  turns out that $h(x)=|x|^{\alpha-1}h^\circ(x)$ is harmonic for $\mP^x$ killed on entering $[-1,1]$. Later we will prove that conditioning $\mP^x$ to avoid $[-1,1]$ is nothing but conditioning $\mP^x$ to avoid zero and then to condition $\mP^{\circ,x}$ to avoid $[-1,1]$.
\begin{remark}\rm
Our argument resonates with the work of  Hirano \cite{Hir_01} who looked at the conditioning of a L\'evy process to stay positive. Under the assumption of a positive Cram\'er number, Hirano conditioned a L\'evy process that drifts to $-\infty$ to stay positive by first Esscher transforming, which is equivalent to conditioning to `drift towards $+\infty$', and then conditioning the Esscher transform to stay positive.
\end{remark}
In the following lemma the Riesz--Bogdan--\.Zak transform is used to identify the harmonic function for $\mP^{\circ,x}$ killed in $[-1,1]$.
\begin{lemma}\label{lemma_harmonic_Bog}
Define $g(x) =|x|^{1-\alpha} h(x)$, then
\begin{itemize}
\item[(i)] $ \mP^{\circ,x}(T_{[-1,1]} = +\infty)= (\alpha-1) g(x)$ for all $x \notin [-1,1]$, 
\item[(ii)] the function
 $g$ is harmonic for $({\emph{\texttt P}}_t^{\circ,[-1,1]})_{t \geq 0}$, where
$${\emph{\texttt P}}_t^{\circ,[-1,1]} f(x) \coloneqq \mE^{\circ,x}\la f({X}_t) \1_{\lc  t <T_{[-1,1]} \rc} \ra.$$
\end{itemize}
\end{lemma}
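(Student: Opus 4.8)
The plan is to deduce (ii) from (i) by a now-standard argument and to reserve the real work for (i). Granting (i), the identity $\mP^{\circ,x}(T_{[-1,1]}=\infty)=(\alpha-1)g(x)$ exhibits $g$, up to the constant $(\alpha-1)^{-1}$, as the $\mP^\circ$-probability of never entering $[-1,1]$; since $\mP^\circ$ is the law of the (transient) stable process conditioned to avoid $0$, this probability is strictly positive and the process is well defined for all time. Harmonicity then follows exactly as in Section \ref{a<1}: writing $g(\cdot)=(\alpha-1)^{-1}\mP^{\circ,\cdot}(T_{[-1,1]}=\infty)$, inserting this into $\mE^{\circ,x}\la \1_{\lc t<T_{[-1,1]}\rc}\,g({X}_t)\ra$, applying the Markov property of $\mP^\circ$, and passing $s\to\infty$ inside the expectation by dominated convergence gives ${\texttt P}_t^{\circ,[-1,1]} g(x)=(\alpha-1)^{-1}\mP^{\circ,x}(T_{[-1,1]}=\infty)=g(x)$. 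I expect this step to be routine, so the proof reduces to establishing (i).

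For (i), the idea is to transfer the avoidance event to the dual stable process via the Riesz--Bogdan--\.Zak transform and to evaluate it using the law of the point of furthest reach. Fix $y\notin[-1,1]$ and write $y=1/x$ with $0<|x|<1$. By the transform, $(X,\mP^{\circ,1/x})\overset{(d)}{=}((1/{X}_{\eta_t})_{t\ge0},\hat\mP^x)$, where $t\mapsto\eta_t$ is a strictly increasing bijection of the $\mP^\circ$-time onto the dual time-interval $[0,T_0)$. Under this identification the event $\lc T_{[-1,1]}=\infty\rc$, i.e. $\lc |{X}_t|>1 \text{ for all } t\rc$, corresponds to $\lc |1/{X}_{\eta_t}|>1 \text{ for all } t\rc=\lc |{X}_{\eta_t}|<1 \text{ for all } t\rc$, and since $\eta$ exhausts $[0,T_0)$ this is precisely the event $\lc \sup_{s<T_0}|{X}_s|<1\rc$ under $\hat\mP^x$. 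Denoting by $\overline m$ the a.s. unique time at which $|{X}|$ attains its supremum over $[0,T_0)$, we obtain
\begin{align*}
\mP^{\circ,1/x}(T_{[-1,1]}=\infty)=\hat\mP^x\big(|{X}_{\overline m}|<1\big),\qquad 0<|x|<1.
\end{align*}

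It remains to compute the right-hand side explicitly. For this I would invoke the recent formula for the distribution of the point of furthest reach ${X}_{\overline m}$ prior to absorption at the origin --- the spatial-inversion counterpart of the point of closest reach used in Section \ref{a<1} --- applied to the dual process, whose positivity parameters are $\rho$ and $\hat\rho$ interchanged. For $x\in(0,1)$ the event $\lc|{X}_{\overline m}|<1\rc$ splits as $\lc{X}_{\overline m}\in(x,1)\rc\cup\lc{X}_{\overline m}\in(-1,0)\rc$, so that $\hat\mP^x(|{X}_{\overline m}|<1)$ is a sum of two integrals of the furthest-reach density over these ranges. After a substitution analogous to the $u=x/z$ of Section \ref{a<1}, these integrals should combine, and the Gamma-function constant of the density should collapse, into $(\alpha-1)|x|^{\alpha-1}\int_1^{1/x}\psi_{\alpha\rho}(u)\,\dd u=(\alpha-1)|x|^{\alpha-1}h(1/x)=(\alpha-1)g(1/x)$, which is the claim for $y=1/x>1$; the case $y<-1$ follows by duality, with $\rho$ and $\psi_{\alpha\rho}$ replaced by $\hat\rho$ and $\psi_{\alpha\hat\rho}$. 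As a consistency check, $(\alpha-1)g(y)\to1$ as $|y|\to\infty$ and $\to0$ as $|y|\downarrow1$, matching the transient behaviour of $\mP^\circ$.

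The main obstacle is the execution of (i): first the bookkeeping in the event translation --- that $\eta$ is a genuine bijection onto $[0,T_0)$, equivalently that $\int_0^\infty|{X}_u|^{-2\alpha}\1_{\lc u<T_0\rc}\,\dd u=\infty$, consistent with $\mP^\circ$ never dying, and that spatial inversion interchanges $\lc|\cdot|>1\rc$ with $\lc|\cdot|<1\rc$ without boundary ambiguity (the furthest reach equals $1$ with probability zero) --- and second the explicit integration against the furthest-reach density together with the precise matching of the Gamma-function constants so that exactly the factor $(\alpha-1)$ emerges. The algebra mirrors the closest-reach computation of Section \ref{a<1}, but the inversion and the passage to the dual process mean the substitution and the normalising constants have to be tracked with care.
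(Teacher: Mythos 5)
Your proposal is correct and follows essentially the same route as the paper: part (ii) is deduced from (i) by the same Markov-property/dominated-convergence argument as in the $\alpha<1$ case, and part (i) is obtained exactly as in the paper, by converting $\lc T_{[-1,1]}=\infty \rc$ under $\mP^{\circ,x}$ via the Riesz--Bogdan--\.Zak transform into the event $\lc |{X}_{\overline{m}}|<1 \rc$ for the dual process started at $1/x$, splitting it into the two one-sided ranges, and integrating the explicit furthest-reach law of Kyprianou et al. The only step you leave unexecuted --- evaluating those two integrals, where after the substitution the paper merges the summands by an integration by parts into exactly $(\alpha-1)g(x)$ --- is computational execution rather than a missing idea, and it goes through as you predict.
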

\begin{proof}
(i) Let $\underline{m}$ be the $[0,\infty)$-valued time such that $|{X}_{\underline{m}}| \leq |{X}_t|$ for all $t \geq 0$ (point of closest reach) and $\overline{m}$ the time such that $|{X}_{\overline{m}}| \geq |{X}_t|$ for all $t \leq T_0$ (point of furthest reach). Then we see for $x \notin [-1,1]$ using the Riesz--Bogdan--\.Zak transform in the second equation:
\begin{align}
\mP^{\circ,x}(T_{[-1,1]} = +\infty) &= \mP^{\circ,x}(|{X}_{\underline{m}}|>1)\nonumber\\
&= \hat{\mP}^{\frac{1}{x}}\left(\Big|\frac{1}{{X}_{\overline{m}}}\Big|>1\right) \nonumber\\
&= \hat{\mP}^{\frac{1}{x}}(|{X}_{\overline{m}}|<1)\nonumber\\
&= \hat{\mP}^{\frac{1}{x}}\left({X}_{\overline{m}} \in \Big[\frac{1}{|x|},1\Big)\right) + \hat{\mP}^{\frac{1}{x}}\left({X}_{\overline{m}} \in \Big(-1,-\frac{1}{|x|}\Big]\right).
\label{2probs}
\end{align}
Now we use Proposition 1.2 of Kyprianou et al. \cite{Kyp_Riv_Sen_01} which gives an explicit expression for the distribution of ${X}_{\overline{m}}$ under $\hat{\mP}^{\frac{1}{x}}$. We consider only the case $x>1$, the case $x<-1$ is similar with $\rho$ replaced by $\hat{\rho}$ and $x$ replaced by $|x|$. We start to compute the first summand in \eqref{2probs} as
\begin{align*}
 &\quad\hat{\mP}^{\frac{1}{x}}\left({X}_{\overline{m}} \in \Big[\frac{1}{x},1\Big)\right)\\
=&\, \frac{\alpha-1}{2}  \int_{\frac{1}{x}}^1 z^{-\alpha} \Big[\big(z+\frac{1}{x}\big)^{\alpha\rho}\big(z-\frac{1}{x}\big)^{\alpha\hat{\rho}-1} -(\alpha-1)\frac{1}{x^{\alpha-1}}  \int_1^{zx} \psi_{\alpha\rho}(v) \, \dd v \Big] \, \dd z \\
=&\, \frac{\alpha-1}{2}  \int_{1}^x u^{-\alpha} \Big[(u+1)\psi_{\alpha\rho}(u) -(\alpha-1)  \int_1^{u} \psi_{\alpha\rho}(v) \, \dd v \Big] \, \dd u,
\end{align*}
where for the third equality we have have made the change variable $z = u/x$.
We can similarly compute the second summand in \eqref{2probs} as
\begin{align*}
\hat{\mP}^{\frac{1}{x}}&\left({X}_{\overline{m}} \in \Big(-1,-\frac{1}{x}\Big]\right)=
 \frac{\alpha-1}{2}  \int_{1}^{x} u^{-\alpha}\Big[(u-1)\psi_{\alpha\rho}(u) -(\alpha-1)  \int_1^{u} \psi_{\alpha\rho}(v) \, \dd v \Big] \, \dd u,
\end{align*}
and, adding the two terms together, it follows that
$$\mP^{\circ,x}(T_{[-1,1]} = +\infty) = (\alpha-1) \int_{1}^{x} \Big[z^{1-\alpha} \psi_{\alpha\rho}(z) -(\alpha-1) z^{-\alpha}  \int_1^{z} \psi_{\alpha\rho}(v) \, \dd v \Big] \, \dd z.$$
Integration by parts yields
\begin{align*}
 \int_{1}^{x} \Big[z^{-\alpha}  \int_1^{z} \psi_{\alpha\rho}(v) \, \dd v \Big] \, \dd z
=& \, -\frac{1}{\alpha-1}x^{1-\alpha}  \int_1^{x} \psi_{\alpha\rho}(v) \, \dd v +  \int_1^x \frac{1}{\alpha-1} z^{1-\alpha}  \psi_{\alpha\rho}(z) \, \dd z,
\end{align*}
hence,
$$\mP^{\circ,x}(T_{[-1,1]} = +\infty) = (\alpha-1) x^{1-\alpha} \int_1^{x} \psi_{\alpha\rho}(v) \, \dd v = (\alpha-1) g(x)$$
for $x>1$. \smallskip

(ii) Using (i) the argument is classical and essentially the same as the one given in Section \ref{a<1}. For the sake of brevity we leave the details to the reader.
\end{proof}

With Lemma \ref{lemma_harmonic_Bog} in hand, it is now straight forward to verify the harmonicity of $h$ using the definition of $\mP^{\circ,x}$ as a change of measure. Indeed, we note that 
\begin{align*}
\mE^x \la \1_{\lc t<T_{[-1,1]} \rc} h({X}_t) \ra &= \mE^{\dagger,x} \la \1_{\lc t<T_{[-1,1]} \rc} g({X}_t)  |{X}_t|^{\alpha-1} \ra\\
&= |x|^{\alpha-1} \mE^{\circ,x} \la \1_{\lc t<T_{[-1,1]} \rc} g({X}_t) \ra \\
&=|x|^{\alpha-1}g(x)\\
&= h(x)
\end{align*}
for all $ x\in\mathbb{R}\backslash [-1,1].$ Hence, $h$ is harmonic for the killed process.

\subsection{Proof of Theorem \ref{thm_cond}}\label{a}
To identify the conditioned processes as $h$-transforms we follow a classical argument based on the Markov property. The argument needs two ingredients. First is the 
existence of an asymptotic tail distribution of the kind 
\begin{equation}
\lim\limits_{s \rightarrow \infty} f(s)\,\mP^x(s < T_{[-1,1]}) =h(x).
\label{fandh}
\end{equation} Second is the harmonicity $h$.

\smallskip

For $\alpha<1$ the argument is straightforward with $f=1$  and the explicit formula for $h(x)=\mP^x(T_{[-1,1]}=\infty)$ already derived in Section \ref{a<1}. Much more interesting are the cases $\alpha=1$ and $\alpha>1$. In both cases, the asymptotic tail distributions are given in classical fluctuation theory results due to Blumenthal et al. \cite{Blu_Get_Ray_01} and Port \cite{Por_01}. Unfortunately, in both cases it is unclear if the limit is harmonic (the limiting expression only implies $h$ is excessive). To justify harmonicity  we use the results from the previous section: for $\alpha=1$ the asymptotic tail distribution of Blumenthal et al. is precisely given by \eqref{precise} which we proved to be harmonic using the Lamperti-Kiu representation. For $\alpha>1$ the limit has a particular form that we can identify (using a recent result of Profeta and Simon \cite{Pro_Sim_01}) as our harmonic function $h$ from the previous section.\smallskip

To summarize the approach, our results only give the harmonicity whereas the known fluctuation theory only gives the needed tail asymptotics. Combining both the limiting procedure from Theorem \ref{thm_cond} can be performed and the limit is identified as $h$-transform with $h$ from Theorem \ref{thm_harmonic}.

\subsubsection{$\underline{\alpha \in (0,1)}$} We start with the simplest case $\alpha\in (0,1)$ where we condition on a positive probability event. As seen in the proof of Theorem \ref{thm_harmonic} the probability of this event is the harmonic function up to a multiplicative constant. That is to say, with $h$ defined as in Theorem \ref{thm_harmonic}, we have
$$h(x)=  \frac{\Gamma(1-\alpha)\Gamma(\alpha\hat{\rho})}{2^{1-\alpha}\Gamma(1-\alpha\rho)}\mP^x(T_{[-1,1]}=\infty).$$
It follows immediately that $h$ is bounded. Hence, for $\Lambda \in \cF_t$, we get with the Markov property and dominated convergence
\begin{align*}
\lim\limits_{s \rightarrow \infty} \mP^x(\Lambda \,|\, t+s < T_{[-1,1]}) 
&=\lim_{s\to\infty} \frac{\mE^x\la\1_\Lambda \1_{\lc t < T_{[-1,1]}\rc} \mP^{{X}_t}( T_{[-1,1]} >s)\ra}{\mP^x(T_{[-1,1]} >t+s)}\\
&= \frac{\mE^x\la\1_\Lambda \1_{\lc t < T_{[-1,1]}\rc} \mP^{{X}_t}( T_{[-1,1]} = \infty)\ra}{\mP^x(T_{[-1,1]} = \infty)}\\
&= \mE^x\la\1_\Lambda \1_{\lc t < T_{[-1,1]}\rc} \frac{h({X}_t)}{h(x)} \ra\\
&= \mP^{\updownarrow,x}(\Lambda).
\end{align*}
\subsubsection{$\underline{\alpha =1}$}\label{seca1}
According to Blumenthal et al. \cite{Blu_Get_Ray_01}, Corollary 3, the tail asymptotics of first hitting times are
\begin{align}\label{abc}
	\lim_{s \rightarrow \infty} \mP^x(s<T_{[-1,1]}) \log (s)  = h(x)
\end{align}
with $h$ from \eqref{precise}. In the previous section we proved that $h$ is harmonic.\smallskip

 Using Fatou's Lemma in the second equality and the strong Markov property in the third equality we get, for $\Lambda \in \cF_t$,
\begin{align*}
\mP^{\updownarrow,x}(\Lambda) &= \frac{1}{h(x)} \mE^x \la \1_\Lambda \1_{\lc t<T_{[-1,1]} \rc} \lim\limits_{s \rightarrow \infty} \log (s) \mP^{{X}_t}(s < T_{[-1,1]}) \ra \\
&\leq \liminf\limits_{s \rightarrow \infty} \frac{\log (s)}{h(x)} \mE^x \la \1_\Lambda \1_{\lc t<T_{[-1,1]} \rc} \mP^{{X}_t}(s < T_{[-1,1]})\ra \\
&= \liminf\limits_{s \rightarrow \infty} \frac{\log (s)}{h(x)} \mP^{x}(\Lambda, t+s < T_{[-1,1]})\\
&= \liminf\limits_{s \rightarrow \infty} \frac{\log (s)}{ \log(t+s) \mP^x(t+s < T_{[-1,1]})} \mP^{x}(\Lambda, t+s < T_{[-1,1]})\\
&=  \liminf\limits_{s \rightarrow \infty} \mP^x(\Lambda \, | \, t+s < T_{[-1,1]}).
\end{align*}
Since $\Lambda^{\mathrm{C}} \in \cF_t$ and we can apply the same calculation for $\Lambda^{\mathrm{C}}$ we also get
$$\mP^{\updownarrow,x}(\Lambda^\mathrm{C}) \leq \liminf\limits_{s \rightarrow \infty} \mP^x(\Lambda^\mathrm        {C} \, | \, t+s < T_{[-1,1]})=  1- \limsup\limits_{s \rightarrow \infty} \mP^x(\Lambda \, | \, t+s < T_{[-1,1]}).$$
At this point in the argument,  it is important  that we have already proved harmonicity of $h$, i.e. that $\mP^{\updownarrow,x}$ is a probability measure. Hence, we can write $\mP^{\updownarrow,x}(\Lambda^\mathrm{C})= 1 - \mP^{\updownarrow,x}(\Lambda)$. This leads us to
$$\mP^{\updownarrow,x}(\Lambda) \geq \limsup\limits_{s \rightarrow \infty} \mP^x(\Lambda \, | \, t+s < T_{[-1,1]})$$
and combining both inequalities the claim follows.
\subsubsection{$\underline{\alpha \in (1,2)}$}\label{sec2}
According to Port \cite{Por_01} the tail asymptotics of first hitting times are
\begin{align}\label{asy}
	\lim\limits_{s \rightarrow \infty} s^{1-\frac{1}{\alpha}}\mP^x(s < T_{[-1,1]}) = c_{\alpha\rho} \lim_{y \rightarrow \infty} u^{[-1,1]}(x,y),
\end{align}
for some constant $c_{\alpha\rho}>0$ and $u^{[-1,1]}(x,y)$ is the density of the potential of the stable process killed on entering $[-1,1]$. Profeta and Simon \cite{Pro_Sim_01} derived explicit formulas for $u^{[-1,1]}(x,y)$, namely
\begin{align}
	u^{[-1,1]}(x,y) &= c_{\alpha\rho}(y-x)^{\alpha-1} \Big(  \int_1^{z(x,y)} \psi_{\alpha\rho}(v) \, \dd v\notag\\
	&\quad -(\alpha-1)(y-x)^{1-\alpha} \int_1^y \psi_{\alpha\hat{\rho}}(v) \, \dd v  \int_1^x \psi_{\alpha\rho}(v) \, \dd v \Big),
	\label{PS}
\end{align}
$1<x<y$.  In the formula we used the abbreviations $z(x,y) = {(xy-1)}/{(y-x)}$ and $c_{\alpha\rho}={2^{1-\alpha}}/{(\Gamma(\alpha\rho)\Gamma(\alpha\hat{\rho}))}$.

 \smallskip

In order to combine Port's asymptotic formula \eqref{asy} with our Theorem \ref{thm_harmonic} we need to send $y$ to infinity in Profeta's and Simon's formula \eqref{PS}.
\begin{lemma}\label{la}
If  $h$ is defined as in Theorem \ref{thm_harmonic}, then, up to a multiplicative constant,
	\begin{align*}
		\lim_{y\to \infty} u^{[-1,1]}(x,y) =h(x),\qquad x\notin [-1,1].
	\end{align*}
\end{lemma}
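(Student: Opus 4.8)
The plan is to substitute the explicit formula \eqref{PS} into the limit and track its two summands separately. Fix $x>1$; the case $x<-1$ will follow afterwards from the spatial symmetry that exchanges $\rho$ and $\hat{\rho}$ (reflection of the stable process), which is exactly the symmetry built into the definition of $h$. Writing
\begin{align*}
A(y)&:=(y-x)^{\alpha-1}\int_1^{z(x,y)}\psi_{\alpha\rho}(v)\,\dd v,\\
B(y)&:=(\alpha-1)\int_1^y\psi_{\alpha\hat{\rho}}(v)\,\dd v\int_1^x\psi_{\alpha\rho}(v)\,\dd v,
\end{align*}
one checks directly from \eqref{PS} that $u^{[-1,1]}(x,y)=c_{\alpha\rho}\bigl(A(y)-B(y)\bigr)$, the cancellation $(y-x)^{\alpha-1}(y-x)^{1-\alpha}=1$ producing the clean form of $B$. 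The first observation is that $z(x,y)=(xy-1)/(y-x)=x+(x^2-1)/(y-x)\to x$ as $y\to\infty$. Since $\alpha-1>0$, the prefactor $(y-x)^{\alpha-1}$ blows up, so each of $A(y)$ and $B(y)$ diverges individually; the heart of the lemma is therefore a cancellation of the leading divergences, both of which I will show equal $y^{\alpha-1}h(x)$.

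For $A(y)$ I would Taylor-expand the inner integral about $x$. Because $z(x,y)-x=(x^2-1)/(y-x)=O(y^{-1})$ and $\psi_{\alpha\rho}$ is continuous at $x>1$, we get $\int_1^{z(x,y)}\psi_{\alpha\rho}(v)\,\dd v=h(x)+O(y^{-1})$. Since $(y-x)^{\alpha-1}=y^{\alpha-1}+O(y^{\alpha-2})=y^{\alpha-1}+o(1)$ while $(y-x)^{\alpha-1}O(y^{-1})=O(y^{\alpha-2})=o(1)$ (here $\alpha-2<0$ is used decisively), this yields $A(y)=y^{\alpha-1}h(x)+o(1)$. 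For $B(y)$ the point is the asymptotics of $\int_1^y\psi_{\alpha\hat{\rho}}(v)\,\dd v$: expanding $\psi_{\alpha\hat{\rho}}(v)=(v-1)^{\alpha\rho-1}(v+1)^{\alpha\hat{\rho}-1}=v^{\alpha-2}+O(v^{\alpha-3})$ as $v\to\infty$, and using that $\alpha-3<-1$ makes the remainder $\psi_{\alpha\hat{\rho}}(v)-v^{\alpha-2}$ integrable on $[1,\infty)$, I obtain
\[
\int_1^y\psi_{\alpha\hat{\rho}}(v)\,\dd v=\frac{y^{\alpha-1}}{\alpha-1}+C+o(1),\qquad C:=\int_1^\infty\bigl(\psi_{\alpha\hat{\rho}}(v)-v^{\alpha-2}\bigr)\dd v-\frac{1}{\alpha-1}.
\]
Because $\int_1^x\psi_{\alpha\rho}(v)\,\dd v=h(x)$, this gives $B(y)=h(x)y^{\alpha-1}+(\alpha-1)C\,h(x)+o(1)$.

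Subtracting, the diverging $y^{\alpha-1}h(x)$ terms cancel and
\[
\lim_{y\to\infty}u^{[-1,1]}(x,y)=c_{\alpha\rho}\lim_{y\to\infty}\bigl(A(y)-B(y)\bigr)=-c_{\alpha\rho}(\alpha-1)C\,h(x),
\]
so the limit is $h(x)$ up to the multiplicative constant $-c_{\alpha\rho}(\alpha-1)C$, which is necessarily nonnegative since $u^{[-1,1]}\ge 0$ and $h>0$. The case $x<-1$ is handled identically after interchanging the roles of $\rho$ and $\hat{\rho}$, matching the definition $h(x)=\int_1^{|x|}\psi_{\alpha\hat{\rho}}(z)\,\dd z$ there. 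The main obstacle is precisely the cancellation: one must expand both $A$ and $B$ one order beyond the diverging $y^{\alpha-1}$ scale, so that the first-order Taylor contribution in $A$ (which vanishes thanks to $\alpha-2<0$) and the subleading constant $C$ in $B$ are controlled simultaneously and seen to leave exactly a multiple of $h(x)=\int_1^x\psi_{\alpha\rho}(v)\,\dd v$.
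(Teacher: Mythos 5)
Your expansion for $x>1$ is correct and is a genuinely different route from the paper's: where the paper applies l'Hopital's rule twice and then integrates by parts, you expand $A(y)$ and $B(y)$ one order beyond the divergent scale $y^{\alpha-1}$ and watch the leading terms cancel. That part is sound (and one can check your constant $-c_{\alpha\rho}(\alpha-1)C$ agrees with the paper's). But two things the lemma actually needs are missing.

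The first gap is strict positivity of the limiting constant. From $u^{[-1,1]}\geq 0$ and $h>0$ you only get $-c_{\alpha\rho}(\alpha-1)C\geq 0$, and nonnegativity is not enough: if the constant were zero, the conclusion degenerates to $\lim_{y\to\infty}u^{[-1,1]}(x,y)=0$, which is useless in Section \ref{sec2}, where Port's asymptotics \eqref{asy} combined with this lemma must produce a \emph{strictly positive} multiple of $h$ (it is divided by, and the argument of Section \ref{seca1} is then repeated with it). Showing $C<0$, i.e. $(\alpha-1)\int_1^\infty\bigl(\psi_{\alpha\hat{\rho}}(v)-v^{\alpha-2}\bigr)\dd v<1$, is not automatic from your expansion: the integrand is positive near $v=1$ (where $\psi_{\alpha\hat{\rho}}$ blows up) and of indefinite sign at infinity, so there is no a priori bound on its integral. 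The paper settles exactly this point by an integration by parts, which identifies the constant as $2(1-\alpha\hat{\rho})\int_1^\infty\psi_{\alpha\hat{\rho}}(v)(v+1)^{-1}\,\dd v$, manifestly in $(0,\infty)$; you need this identity or an equivalent argument to finish.

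The second gap is the case $x<-1$, which does not ``follow identically after interchanging $\rho$ and $\hat{\rho}$''. Reflection $X\mapsto -X$ does swap $\rho$ and $\hat{\rho}$, but it also sends the evaluation point $y$ to $-y$: for $x<-1$, the quantity $\lim_{y\to+\infty}u^{[-1,1]}(x,y)$ becomes, after reflection, the limit of the reflected process's killed potential density started from $-x>1$ and evaluated at points tending to $-\infty$, i.e.\ on the \emph{opposite} side of $[-1,1]$ from the starting point. Formula \eqref{PS} only covers $1<x<y$; the cross-interval density is a different Profeta--Simon formula with a different integrand, and it requires its own asymptotic analysis (this is exactly why the paper invokes the formula for $u^{[-1,1]}(x,y)$ with $x>1$ and $y<-1$). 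Moreover, for the lemma to serve its purpose you must check that the constant arising for $x<-1$ equals the one for $x>1$; otherwise the limit function is not proportional to $h$ on all of $\mR\backslash[-1,1]$, and its harmonicity cannot be read off from Theorem \ref{thm_harmonic}. Your symmetry remark addresses neither point.
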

\begin{proof}
 First note that 
 \begin{align*}
 	\lim_{y\to\infty}  \int_1^{z(x,y)} \psi_{\alpha\rho}(v) \, \dd v= \int_1^{x} \psi_{\alpha\rho}(v) \, \dd v.
\end{align*}	
	 On the other hand we see with l'Hopital's rule
$$\lim_{y \rightarrow \infty} (\alpha-1)(y-x)^{1-\alpha}  \int_1^y \psi_{\alpha\hat{\rho}}(v) \, \dd v = \lim_{y \rightarrow \infty} (y-x)^{2-\alpha} \psi_{\alpha\hat{\rho}}(y) = 1.$$
Hence, $u^{[-1,1]}(x,y)$ is a product of two functions, one tending to $+\infty$, the other tending to $0$. Applying  l'Hopital's rule again to the whole term we get
\begin{align*}
&\,\lim_{y \rightarrow \infty} c_{\alpha\rho}^{-1} u^{[-1,1]}(x,y) \\
=&\, \lim_{y \rightarrow \infty} \frac{1}{(1-\alpha)(y-x)^{-\alpha}} \Bigg[ \frac{1-x^2}{(y-x)^2} \psi_{\alpha\rho}(z(x,y)) \\
&\quad - (\alpha-1)  \int_1^x \psi_{\alpha\rho}(v) \, \dd v \Bigg((1-\alpha)(y-x)^{-\alpha} \int_1^y \psi_{\alpha\hat{\rho}}(v)\, \dd v + \psi_{\alpha\hat{\rho}}(y)(y-x)^{1-\alpha} \Bigg) \Bigg] \\
=&\, \lim_{y \rightarrow \infty}\frac{(1-x^2)\psi_{\alpha\rho}(z(x,y))}{(1-\alpha)(y-x)^{2-\alpha}}  +  \int_1^x \psi_{\alpha\rho}(v) \, \dd v \Bigg((1-\alpha) \int_1^y \psi_{\alpha\hat{\rho}}(v)\, \dd v + \psi_{\alpha\hat{\rho}}(y)(y-x) \Bigg).
\end{align*}
The first summand converges to $0$ since $z(x,y)$ converges to $x$ and $\alpha <2$. So it remains to show that
$$\psi_{\alpha\hat{\rho}}(y)(y-x)- (\alpha-1) \int_1^y \psi_{\alpha\hat{\rho}}(v)\, \dd v$$
converges to some positive constant which is independet of $x$. It is clear that both terms tend to $+\infty$ with order $\alpha-1$. We rewrite the term in the following way:
\begin{align*}
&\quad\psi_{\alpha\hat{\rho}}(y)(y-x)- (\alpha-1) \int_1^y \psi_{\alpha\hat{\rho}}(v)\, \dd v \\
&=\, \psi_{\alpha\hat{\rho}}(y)(y-1) - \psi_{\alpha\hat{\rho}}(y)(x-1) - (\alpha-1) \int_1^y \psi_{\alpha\hat{\rho}}(v)\, \dd v.
\end{align*}
Of course $\psi_{\alpha\hat{\rho}}(y)(x-1)$ converges to $0$. So it remains to show that 
$$\psi_{\alpha\hat{\rho}}(y)(y-1)- (\alpha-1) \int_1^y \psi_{\alpha\hat{\rho}}(v)\, \dd v$$
converges. Note that we can write
$$\psi_{\alpha\hat{\rho}}^\prime(y) = \psi_{\alpha\hat{\rho}}(y) \big[(\alpha\rho-1)(v-1)^{-1}+(\alpha\hat{\rho}-1)(v+1)^{-1}\big],$$
and 
with this we see that
\begin{align*}
&\quad\psi_{\alpha\hat{\rho}}(y)(y-1)- (\alpha-1) \int_1^y \psi_{\alpha\hat{\rho}}(v)\, \dd v \\
&= \,  \int_1^y \left(\psi_{\alpha\hat{\rho}}^\prime(v)(v-1) + \psi_{\alpha\hat{\rho}}(v)\right)\, \dd v - (\alpha-1) \int_1^y \psi_{\alpha\hat{\rho}}(v)\, \dd v \\
&= \,  \int_1^y \psi_{\alpha\hat{\rho}}(v)\left((\alpha\rho-1)+ (\alpha\hat{\rho}-1)\frac{v-1}{v+1} + 1 -(\alpha-1)\right) \, \dd v \\
&= \, 2(1-\alpha\hat{\rho})  \int_1^y \psi_{\alpha\hat{\rho}}(v) \frac{1}{v+1} \, \dd v.
\end{align*}
Since  $\psi_{\alpha\hat{\rho}}(v) /(v+1)$ behaves like $(v-1)^{\alpha\rho-1}$ for $v \searrow 0$ and like $v^{\alpha-3}$ for $v \rightarrow +\infty$, it follows that $ \int_1^\infty \psi_{\alpha\hat{\rho}}(v)/(v+1) \, \dd v \in (0,\infty)$ because $\alpha\rho \in (0,1)$ and $\alpha-3 <-1$. The claim for $x<-1$ follows by applying duality and the formula for $u^{[-1,1]}(x,y)$ for $x>1$ and $y<-1$, similar to \eqref{PS}, which is also found in  Profeta and Simon \cite{Pro_Sim_01}.
\end{proof}
The proof of Theorem \ref{thm_cond} in this regime of $\alpha$ can now be copied from Section \ref{seca1} replacing $\log (s)$ by $s^{1-\frac{1}{\alpha}}$ in \eqref{abc}. The desired harmonicity of  $\lim\limits_{s \rightarrow \infty} s^{1-\frac{1}{\alpha}}\mP^x(s < T_{[-1,1]}) ,$ as a function of $x$, comes from Lemma \ref{la} combined with Theorem \ref{thm_harmonic}.


\subsection{Proof of Theorem \ref{thm_trans}} We appeal to a very classical result of Getoor, which seems not to have been used before in analogous scenarios. We mention for example proving transience in the setting of  L\'evy process conditioned to be positive (see e.g. the approach in  Lemma VII.12 of Bertoin \cite{Bert_01}), where the following proof would work equally well. 

\smallskip

As our conditioned stable process conforms to the definition of a regular Markov process according to the definition of Getoor \cite{G}, it follows from the same paper that the process is transient if there exists a sequence of domains $B_n$, $n\in\mathbb{N}$, such that $B_n\uparrow\mathbb{R}\backslash[-1,1]$ such that  

\[
	\mE^{\updownarrow,x} \Bigg[  \int_0^\infty \1_{\lc {X}_t \in B_n \rc} \, \dd t \Bigg] < \infty,\qquad |x|>1,
\]
for each $n\in\mathbb{N}$. Here, Getoor's definition of transience implies that, for all bounded intervals $B$ in $\mathbb{R}$, $\mP^{\updownarrow,x}(\ell_B<\infty)=1 $, for all $|x|>1$, where
\[
\ell_B = \sup\{t>0: {X}_t\in B\}.
\]
Note, in turn, this implies the notion of transience highlighted in the statement of Theorem \ref{thm_trans}.

\smallskip

It therefore suffices to prove that 
\begin{align}\label{newuj}
	\mE^{\updownarrow,x} \Bigg[  \int_0^\infty \1_{\lc {X}_t \in [-d,d] \rc} \, \dd t \Bigg] < \infty,\qquad |x|>1,
\end{align}
for all $d>1$.
To this end,  note that
\begin{align*}
\mE^{\updownarrow,x} \Bigg[  \int_0^\infty \1_{\lc {X}_t \in [-d,d] \rc} \, \dd t \Bigg] &=  \int_0^\infty \mE^{x}\la \1_{\lc {X}_t \in [-d,d] \rc} \1_{\lc t < T_{[-1,1]} \rc} \frac{h({X}_t)}{h(x)} \ra \, \dd t  \\
&\leq \sup_{y \in [-d,d]\backslash [-1,1]} \frac{h(y)}{h(x)} \, \mE^x \Bigg[  \int_0^{T_{[-1,1]}} \1_{\lc {X}_t \in [-d,d] \rc} \, \dd t \Bigg] \\
&= \sup_{y \in [-d,d]\backslash [-1,1]} \frac{h(y)}{h(x)} \, U_{[-1,1]}\big(x,[-d,d]\big),
\end{align*}
where $U_{[-1,1]}\big(x,\dd y\big)$ is the potential measure of the stable process killed on entering $[-1,1]$. The explicit form of $h$ implies
$$ \sup_{y \in [-d,d]\backslash [-1,1]} h(y) = \max(h(-d),h(d)) <\infty.$$
Hence, it is sufficient to show finiteness of the potentials $U_{[-1,1]}(x,[-d,d])$, $|x|>1$. Taking account of the explicit formula for the killed potential density $u^{[-1,1]}(x,y)$ from Profeta and Simon \cite{Pro_Sim_01}, it is a straightforward, if not tedious, analytical exercise to verify that indeed $U_{[-1,1]}(x,[-d,d]) = \int_{[-d,d]\backslash[-1,1]}u^{[-1,1]}(x,y) \,\dd y$ is finite.

\bibliographystyle{abbrvnat}
\bibliography{references}

\end{document}